\newtheorem{theorem}{Theorem}[section]
\newtheorem{lemma}[theorem]{Lemma}
\newtheorem{proposition}[theorem]{Proposition}
\newtheorem{definition}[theorem]{Definition}
\newtheorem{remark}[theorem]{Remark}
\newcommand{\R}{\mathbb{R}}
\newcommand{\N}{\mathbb{N}}
\newcommand{\p}{\partial}
\newcommand{\eps}{\varepsilon}
\numberwithin{equation}{section}
\title[On the viscosity linearization method]{On the viscosity linearization method without compactness}
\author{Anthony Salib}
\address{Department of Mathematics, University of Duisburg-Essen, Thea-Leymann-Strasse 9, 45127 Essen,Germany.}
\email{anthony.salib@stud.uni-due.de}
\subjclass[2020]{35B65,35D40,35J15}
\begin{document}
\renewcommand{\div}{\textnormal{div}}
\newcommand{\dist}{\textnormal{dist}}
\newcommand{\loc}{\textnormal{loc}}
\newcommand{\osc}{\textnormal{osc}\hspace{0.5mm}}
\newcommand{\diam}{\textnormal{diam}}
\newcommand{\naught}{_{\circ}}
\newcommand{\supp}{\textnormal{supp}}

\maketitle
\begin{abstract}
	Savin's small perturbation approach has had far reaching applications in the theory of non-linear elliptic and parabolic PDE. In this short note, we revisit his seminal proof of De-Giorgi's improvement of flatness theorem for minimal surfaces and provide an approach based on the Harnack inequality that avoids the use of compactness arguments.
\end{abstract}

\section{Introduction}
In this paper we consider minimizers of the perimeter functional in $B_1$
\begin{equation*}
	P(E) = \int_{B_1} \abs{D\chi_{E}} := \sup_{\substack{g\in C^1_0(B_1;\R^n)\\ \norm{g}_{L^{\infty}}\leq 1}}\int_{E} \div(g).
\end{equation*}
We recall that a set $E \subset B_1$ is said to be minimal in $B_1$ if for any set $F \subset B_1$ such that $(E \backslash F) \cup (F\backslash E) \subset\subset B_1$ it holds that $P(E)\leq P(F)$. Moreover, the boundary of a minimal set $E$, $\p E$, is called a minimal surface. 

By viewing $\p E$ as a multi-valued graph in $\R^{n-1}$, it is shown in \cite{caffarelli1993elementary} that minimal surfaces are viscosity solutions (see Section \ref{preliminaries} for a precise statement) of the minimal surface equation
\begin{equation}
	(1+\abs{Du}^2)\Delta u - Du^{T} D^2 u Du= 0.
	\label{MSE}
\end{equation}

In the seminal work \cite{savin2009phase}, Savin showed that minimal surfaces in this setting satisfy the following Harnack inequality. 
\begin{theorem}[Harnack Inequality]\label{harnack}
	There exists universal constants $\eps_1(n)$ and $\eta > 0$  such that if $\p E$ is a minimal surface and 
	\begin{equation*}
		\p E \cap B_1 \subset \left\{\abs{x_n} \leq \eps\right\}
	\end{equation*}
	for some $\eps \leq \eps_1(n)$, then 
	\begin{equation*}
		\p E \cap B_{1/2} \subset \left\{\abs{x_n} \leq \eps(1-\eta)\right\}.
	\end{equation*}
\end{theorem}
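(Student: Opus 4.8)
The plan is to follow Savin's strategy and extract the two-sided inclusion from a one-sided ``weak Harnack'' estimate, exactly as the oscillation decay of a harmonic function is read off from the Harnack inequality applied to $v-\inf v$ and $\sup v-v$. Normalizing so that $0\in\partial E$ (the substantive case, and the standard normalization in this theory), it suffices to exhibit universal constants $\eps_1,\eta$ such that if $\partial E\cap B_1\subset\{|x_n|\le\eps\}$, $0\in\partial E$ and $\eps\le\eps_1$, then $\partial E\cap B_{1/2}\subset\{x_n\le(1-\eta)\eps\}$. Indeed the reflection $x_n\mapsto-x_n$ sends a minimal set through the origin to a minimal set through the origin, so the same statement applied to the reflected set gives $\partial E\cap B_{1/2}\subset\{x_n\ge-(1-\eta)\eps\}$ as well, and the conjunction of the two is the Theorem.

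For the one-sided statement I would argue by contradiction, assuming $X_0\in\partial E\cap B_{1/2}$ with $(X_0)_n>(1-\eta)\eps$, so that $\partial E$ reaches within $\eta\eps$ of the top of the slab at $X_0$ while passing through height $0$ over the origin. Two structural facts are available: by \cite{caffarelli1993elementary} a minimal surface is a viscosity solution of \eqref{MSE}, hence obeys the comparison principle against smooth strict sub- and super-solutions at every point of the (a priori multi-sheeted) surface; and the perimeter-minimality of $E$ yields the density estimates $|E\cap B_r(Y)|,\,|B_r(Y)\setminus E|\gtrsim r^n$ for $Y\in\partial E$. These combine as follows. Since $\eps\le\eps_1$ forces every barrier we use to have gradient of size $O(\eps_1)$, the operator in \eqref{MSE} is a small perturbation of $\Delta$; consequently an $\eps$-rescaling of a radial harmonic function --- e.g.\ the fundamental solution of $\Delta$ on an annulus $\{c_0\le|x'|\le 3/4\}$, corrected by a small quadratic --- is a strict sub- or super-solution of \eqref{MSE}. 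Sliding such a barrier against $\partial E$, using the comparison principle to forbid interior tangency, and accounting for the ``crossings'' through the perimeter-minimality of $E$, one obtains a De Giorgi--type measure estimate: the set of $x'\in B'_{1/2}$ over which $\partial E$ reaches the upper portion of the slab, which the density estimates already show to have measure $\gtrsim\eps^{n-1}$ near $X_0'$, in fact fills a fixed fraction of $B'_{1/2}$. A final weak-Harnack/covering step then pins $\partial E$ strictly above height $0$ over a ball $B'_{c_0}$ containing the origin, contradicting $0\in\partial E$; this closes the argument and determines $\eta$ in terms of the constants produced.

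The step I expect to be the crux --- and the reason this theorem is classically proved through a blow-up/compactness argument --- is precisely that $\partial E$ is not a priori a graph, so one cannot feed it to the scalar De Giorgi--Nash--Moser Harnack inequality; the barrier and measure arguments must be carried out intrinsically, either through the viscosity formulation of \eqref{MSE} (which already packages the correct one-sided maximum principle for each sheet of $\partial E$) or through the minimality/density estimates, and it is the translation of the sliding-barrier picture into a quantitative measure statement, together with its iteration, that requires care. A secondary but essential bookkeeping point is the order of the universal constants: the amplitude and the inner radius $c_0$ of the barrier are fixed universal constants, and $\eps_1$ is chosen only afterwards --- small enough (in terms of those and of $n$) both for the linearization of \eqref{MSE} to be legitimate and so that the slab width $2\eps$ is negligible against the barrier's amplitude.
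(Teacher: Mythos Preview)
The paper does not prove this theorem. Theorem~\ref{harnack} is stated as a known result and attributed to Savin \cite{savin2009phase}; the paper explicitly says ``Theorem~\ref{harnack} was already proven in \cite{savin2009phase} using constructive methods \ldots\ Therefore, in this note we will only be concerned with obtaining estimates for $r\naught$ and $\eps\naught$ in terms of the constants $\eps_1$ and $\eta$,'' and additionally points to \cite{de2023short} for a short quantitative proof. The Harnack inequality is used here purely as a black box feeding into Lemma~\ref{stretchgraph}.

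Your sketch is, in outline, Savin's own argument: reduce to a one-sided statement (with the normalization $0\in\partial E$, which is indeed the substantive case and is how the result is applied in the paper), build radial sub/super-solution barriers out of perturbed harmonic profiles exploiting $|Du|=O(\eps)$, slide and use the viscosity comparison principle together with the density estimates to propagate a measure-theoretic ``positivity set,'' and close with a covering/iteration of De~Giorgi type. So there is no discrepancy to flag between your approach and the paper's --- you are reconstructing the cited proof, not an alternative to something in the text. The one place your write-up is genuinely schematic is the passage from ``the barrier cannot be touched in the interior'' to a quantitative measure estimate and its iteration; that is exactly where the work lives in \cite{savin2009phase} and \cite{de2023short}, and your proposal correctly identifies it as the crux without actually carrying it out.
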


Using Theorem \ref{harnack} and a compactness argument, Savin in \cite[Theorem 5.2]{savin2009phase} then showed that there exists universal constants $\eps\naught >0$ and $r\naught > 0$ such that if
\begin{equation*}
	\p E \cap B_1 \subset \left\{\abs{x_n} \leq \eps\right\} 
\end{equation*}
for some $\eps \leq \eps\naught$, then there exists $\nu \in \mathbb{S}^{n-1}$ such that
\begin{equation*}
	 \p E \cap B_{r\naught} \subset \left\{\abs{x\cdot\nu}\leq \frac{\eps}{2}r\naught\right\}.
\end{equation*}
This is the celebrated improvement of flatness theorem for minimal surfaces and is a key step in the corresponding regularity theory. In fact, by iterating this result one readily obtains the smoothness of a minimal surface around the origin. 

This improvement of flatness acts as a prototype for a large number of ``$\eps$-regularity results" - if a solution $u$ of some PDE  is close enough to a special solution, then $u$ is smooth. In the case of minimal surfaces, the special solution considered is just the hyperplane. 

The key idea in establishing such an $\eps$-regularity result is to first consider solutions to the linearized equation around the special solution. For instance, if the PDE is of the form $F(D^2u,Du)=0$ (as is \eqref{MSE}), then a function $v$ satisfies the linearized equation around a solution $w$ if $L_w(v) = 0$, where 
\begin{equation}
	L_w(v) = \lim_{\eps \to 0} \frac{F(D^2(w+\eps v),D(w+\eps v))-F(D^2w,Dw)}{\eps}.
\end{equation}
The next step is to then show that if $u$ is close enough to the special solution, then certain properties of solutions of the linearized equation are inherited by $u$.

This idea has become ubiquitous in the regularity theory of non-linear elliptic PDE \cite{savin2007small,de2011free,de2015free,de2021perturbative,fernandezserra2020regularity,audrito2022interface}. For instance, in the same seminal paper \cite{savin2009phase}, Savin uses the linearization method to obtain an improvement of flatness theorem for certain solutions of the Allen-Cahn equation (which he then uses to conclude the 1D-symmetry of such solutions). In the case of the Bernoulli free boundary problem, an analogous improvement of flatness theorem around free boundary points is obtained using these same ideas \cite{de2011free}. Moreover, this method has been extended to the parabolic setting in the one-phase Stefan problem \cite{de2021perturbative}. 

In the case of the minimal surface equation \eqref{MSE}, the linearized PDE is simply $\Delta u =0$ and so one expects that minimal surfaces are well approximated by harmonic functions. Although Savin used this linearization technique with a viscosity approach in \cite{savin2009phase}, the improvement of flatness theorem was first shown by De-Giorgi in the variational setting \cite{de1961frontiere}. In both \cite{savin2009phase} and \cite{de1961frontiere} a compactness argument is used to obtain a harmonic function that appropriately approximates the minimal surface and as a result, the value $\eps\naught$ appearing in their results is not quantified. Schoen and Simons in \cite{schoen1982new} provided a constructive proof in the variational setting, that is, a proof in which all the constants can be explicitly computed. Moreover, Caffarelli and C\'ordoba in \cite{caffarelli1993elementary} give another constructive proof using viscosity techniques.

While also based on viscosity methods, the approach given in \cite{savin2009phase} is distinct to that in \cite{caffarelli1993elementary} as it is based on a Harnack inequality for minimal surfaces (Theorem \ref{harnack}). Broadly speaking, the framework established by \cite{savin2009phase} is that a Harnack inequality implies the improvement of flatness result. Similar to the proof in \cite{savin2009phase}, the previously mentioned applications of the viscosity linearization method utilise a compactness argument to obtain solutions to the linearized PDE. For physical applications, it is important to quantify the constants appearing in the theory as it reveals the scales at which these physical phenomena are expected to conform to the theory's predictions. The objective of this note is to provide a constructive proof of the improvement of flatness theorem within the framework of \cite{savin2009phase}. We believe the methods developed in this note can be applied to other settings in order to obtain quantitative statements analogous to our main result, Theorem \ref{improvementofflatness} below. 

Recalling the values $\eps_1$ and $\eta$ from Theorem \ref{harnack}, our main result can be stated as follows.

\begin{theorem}[Improvement of Flatness]
	\label{improvementofflatness}
	Let $\alpha = -\log(1-\eta)$ and $\gamma = \frac{1}{1-\alpha}$ and define the constants
	\begin{equation*}
		r\naught = \frac{1}{2^{16}n^2}, \hspace{2mm} \eps\naught = \left(\frac{\eps_1^{\gamma\alpha}}{2^{33+5\alpha}n^3}\right)^{\frac{8}{\gamma\alpha^2}}.
	\end{equation*}
	If $\p E$ is a minimal surface in $B_1$, $0\in \p E$ and
	$$\p E \cap B_1 \subset \left\{\abs{x_n}\leq\eps \right\}$$ for some $\eps\leq\eps\naught$, then there exists $\nu \in \mathbb{S}^{n-1}$ such that $$\p E \cap B_{r\naught} \subset \left\{\abs{x\cdot \nu}\leq\frac{\eps}{2}r\naught\right\}.$$
\end{theorem}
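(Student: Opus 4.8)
The plan is to iterate the Harnack inequality (Theorem~\ref{harnack}) to extract a H\"older-type decay of the oscillation of $\partial E$ in the $x_n$-direction, and then to use this decay at a fixed geometric scale to produce the flat direction $\nu$ and the improved bound. The key point is that, unlike the compactness argument, the Harnack inequality is quantitative, so every constant can be tracked explicitly.

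First I would set up the iteration. Starting from $\partial E \cap B_1 \subset \{|x_n| \le \eps\}$ with $\eps \le \eps_1$, Theorem~\ref{harnack} gives $\partial E \cap B_{1/2} \subset \{|x_n| \le \eps(1-\eta)\}$. One then wants to rescale $B_{1/2}$ to $B_1$ and reapply Harnack; but after rescaling by the factor $2$, the sheet is contained in $\{|x_n| \le 2\eps(1-\eta)\}$, which is \emph{worse} than $\eps$ unless one also recentres and, crucially, keeps iterating so that the gain compounds. The honest bookkeeping is: at step $k$, in the ball $B_{2^{-k}}$ (centred appropriately at a point of $\partial E$), one has $\partial E \subset \{|x_n - c_k| \le \eps(1-\eta)^k\}$ for suitable centres $c_k$, \emph{provided} the rescaled flatness $2^k \eps (1-\eta)^k = \eps(2(1-\eta))^k$ — or rather the relevant ratio $\eps(1-\eta)^k / 2^{-k}$ — stays below $\eps_1$ at every step. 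Solving $\eps (2(1-\eta))^{k} \le \eps_1$ determines how many steps $N$ one may take; this is exactly where the exponents $\alpha = -\log(1-\eta)$, $\gamma = 1/(1-\alpha)$ and the precise smallness $\eps\naught$ enter, so that $N$ is large enough that $2^{-N}$ is comparable to the target scale $r\naught$.

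Having run the iteration $N$ times, the sheet $\partial E$ is trapped, in $B_{2^{-N}}$, between two nearly-horizontal hyperplanes at vertical distance $\eps(1-\eta)^N$; dividing by the radius $2^{-N}$ gives a slope bound. The flat direction $\nu$ is then the normal to the best approximating hyperplane through the trapped sheet, and a short geometric computation converts "$\partial E$ lies in a slab of half-width $\eps(1-\eta)^N$ over $B_{2^{-N}}$" into "$\partial E \cap B_{r\naught} \subset \{|x\cdot\nu| \le \tfrac{\eps}{2} r\naught\}$", after choosing $r\naught = 2^{-N}$ (up to the universal factor $1/(2^{16}n^2)$, which I expect absorbs the difference between the recentred balls $B_{2^{-k}}(p_k)$ and balls centred at the origin, plus the change from a tilted slab to a slab in the coordinate direction $e_n$ versus the new direction $\nu$). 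Tracking the centres $c_k$: since $|c_{k+1}-c_k| \le \eps(1-\eta)^k$, the total drift $\sum_k |c_{k+1}-c_k|$ is a convergent geometric series bounded by $\eps/\eta$, which is harmless because $0 \in \partial E$ pins things down and $\eps$ is tiny.

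The main obstacle I anticipate is the scale-matching in the iteration: because each application of Harnack costs a factor $2$ in the rescaling while only gaining a factor $(1-\eta)$ in the oscillation, the iteration can only be sustained for finitely many steps, and one must choose $\eps\naught$ so small that the number of admissible steps $N$ lands the scale $2^{-N}$ on the prescribed $r\naught$ with the prescribed final slope $\eps/2$ rather than merely $O(\eps)$. Getting the constant "$1/2$" exactly (not just "$\le C\eps$ for some universal $C<1$") forces one extra margin in the exponent, which is why $\eps\naught$ has the somewhat baroque form $\big(\eps_1^{\gamma\alpha}/(2^{33+5\alpha}n^3)\big)^{8/(\gamma\alpha^2)}$; the bulk of the proof will be the elementary but delicate inequality manipulations verifying that this choice indeed closes the induction at every one of the $N$ steps and yields the stated $r\naught$ and final bound.
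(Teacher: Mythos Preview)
Your proposal has a genuine gap: the Harnack iteration by itself cannot yield the improvement of flatness, and no choice of $\eps\naught$ rescues it. With $\alpha=-\log_2(1-\eta)\le\tfrac14$, after $N$ steps the oscillation in $B_{2^{-N}}$ is at best $\sim\eps(1-\eta)^N$, so the flatness ratio is $\eps(1-\eta)^N/2^{-N}=\eps\,2^{(1-\alpha)N}$, which \emph{increases} with $N$. Thus the slab you obtain in $B_{2^{-N}}$ has half-width strictly larger than $\tfrac{\eps}{2}\cdot 2^{-N}$, not smaller; tilting to a ``best approximating hyperplane'' cannot repair this, since the Harnack iteration gives no information distinguishing, say, the graph of $\eps|x'|^{\alpha}$ from a genuinely flat piece. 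In short, Harnack delivers a $C^{\alpha}$ estimate, and the improvement of flatness is a $C^{1,\alpha}$-type statement; the passage from one to the other is precisely the content of the theorem and cannot be skipped.

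What the paper actually does is use the Harnack iteration only as a first step (Lemma~\ref{stretchgraph}), to show the rescaled multi-graph $A=\eps^{-1}\partial E$ is ``almost $C^{\alpha}$''. It then regularises $A$ by inf-convolution to an honest $C^{\alpha}$ function $u$ (Lemma~\ref{holdergraph}), shows via a viscosity argument that smooth test functions touching $u$ have nearly nonnegative Laplacian (Lemma~\ref{almostviscosity}), and uses this to trap $u$ between explicit super/sub-solutions $w_\pm$ of the Laplace equation with nearly the same boundary data (Lemma~\ref{constructwplus}). The conclusion is that $u$ is $L^\infty$-close to its \emph{harmonic replacement} $w$ (Proposition~\ref{closetoharmonic}). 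The tilted direction $\nu$ then comes from $\nabla w(0)$, and the factor $\tfrac12$ from the quadratic Taylor remainder of the smooth harmonic $w$, controlled by interior derivative estimates. The harmonic approximation---the ``linearization'' of the title---is the essential missing idea in your outline.
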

We note that Theorem \ref{harnack} was already proven in \cite{savin2009phase} using constructive methods and so the constants $\eps_1$ and $\eta$ appearing in the statement of Theorem \ref{improvementofflatness} are known. Moreover, we refer to \cite{de2023short} for a short proof of Theorem \ref{harnack} in which the constants can be easily quantified. Therefore, in this note we will only be concerned with obtaining estimates for $r\naught$ and $\eps\naught$ in terms of the constants $\eps_1$ and $\eta$. 
\subsection{Structure of the proof}
The main step in the proof of Theorem \ref{improvementofflatness} is to show that the minimal surface is well approximated by a harmonic function. Of course since $\p E$ is not assumed to be a graph in the $x_n$-direction, we must first appropriately regularize $\p E$ using inf-convolution (c.f. Lemma \ref{holdergraph}). We will then show that the harmonic replacement of this regularization is close in $L^{\infty}$ to the surface itself (see Proposition \ref{closetoharmonic}). In this way we explicitly construct the desired harmonic approximation. Once this approximation is obtained, we can conclude similarly to \cite{savin2009phase}. 
\subsection{Acknowledgements}
The results described in this paper were achieved in the Master's thesis of the author. The author expresses his gratitude to his advisor Joaquim Serra for his patient guidance and support during the preparation of this result, as well as to Federico Franceschini for helpful comments on a preliminary version of the manuscript. The author is supported by the Universität Duisburg-Essen Faculty of Mathematics scholarship.

\section{Notation and Preliminaries}\label{preliminaries}
\subsection{Notation}
Throughout this work $\R^n$ will be endowed with the Euclidean inner product $x\cdot y$ and it's induced norm $\abs{x}$. For any matrix $Q \in \R^{n\times n}$ we will denote by $\abs{Q}$ the induced norm 
\begin{equation*}
 \abs{Q} = \max_{\abs{x}\leq 1} \abs{Qx}.
\end{equation*}
$B_r(x)$ represents the ball of radius of $r$ with centre $x$. We will also represent points $x\in \R^n$ as $(x',x_n) \in \R^{n-1}\times \R$ and use $B'_r(x')$ to represent a ball in $\R^{n-1}$ with centre $x'$ and radius $r$. Moreover, given any measurable set $E \subset \R^n$ we will denote by $E^c$ the set $\R^n\backslash E$. 

We will denote by $2^{\R}$ all possible subsets of $\R$. For a multi-valued graph $A: \R^{n-1} \to 2^{\R}$, we define
\begin{equation*}
	A(x')-A(y') = \sup\left\{t-s; t\in A(x'), s \in A(y') \right\},
\end{equation*}
and
\begin{equation*}
	\abs{A}=\sup_{x'\in\R^{n-1}}\sup\left\{s; s\in A(x') \right\}.
\end{equation*}
Note that the oscillation of $A$ in any $\Omega \subset \R^{n-1}$ is well defined as 
\begin{equation*}
	\underset{\Omega}{\osc} A = \sup_{\Omega} A - \inf_{\Omega} A.
\end{equation*}

Finally, given any $x \in \R$, $\lfloor x \rfloor$ represents the number $n \in \mathbb{Z}$ such that $n \leq x \leq n+1$. 
\subsection{Preliminaries}
We will follow the notation and definitions given in \cite{savin2009phase}. 

\begin{definition}\label{def:element}
	Given a measurable set $E \subset \R^n$ we say that $x \in \p E$ if for any $r>0$ there holds that $\abs{B_r(x) \cap E} > 0$ and $\abs{B_r(x) \cap E^c} >0$. 
\end{definition}

Recalling the density estimates for minimal surfaces (\cite[Theorem 4.1]{savin2009phase}) it is obvious that $\p E$ is closed in the sense of Definition \ref{def:element}

We will now recall the definition of viscosity solutions for \eqref{MSE} (see \cite[Section 5]{savin2009phase}). Given any smooth function $\varphi : \R^{n-1} \to \R$, we define it's sub-graph to be the set $\Gamma_{\varphi}=\{x' \in \R^{n-1}: x_n < \varphi(x')\}$. Similarly it's super-graph is the set $\Gamma^{\varphi} :=  \{x': x_n > \varphi(x')\} $.

\begin{definition}[Viscosity solution]\label{definition:viscosity:solution}
A set $E \subset B_1$ is a viscosity solution of \eqref{MSE} if for any smooth function $\varphi : \R^{n-1} \to \R$ such that $\Gamma_{\varphi} \cap B_{r}(y)$ is contained in either $E$ or $E^c$ for some $y \in \p E \cap \p \Gamma_{\varphi}$ and $r>0$, then 
\begin{equation*}
	(1+\abs{D\varphi}^2)\Delta \varphi - D\varphi^{T} D^2 \varphi D\varphi\leq0.
\end{equation*}
If $\Gamma^{\varphi} \cap B_{r}(y)$ is contained in either $E$ or $E^c$ for some $y \in \p E \cap \p \Gamma^{\varphi}$ and $r>0$, then 
\begin{equation*}
	(1+\abs{D\varphi}^2)\Delta \varphi - D\varphi^{T} D^2 \varphi D\varphi\geq0.
\end{equation*}
\end{definition}

We have the following result from \cite{caffarelli1993elementary}.
\begin{theorem}
	Let $\p E$ be a minimal surface in $B_1$. Then $\p E$ is a viscosity solution of \eqref{MSE} in the sense of Definition \ref{definition:viscosity:solution}.
\end{theorem}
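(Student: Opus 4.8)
The plan is to show that $\p E$ satisfies the viscosity inequalities of Definition~\ref{definition:viscosity:solution} by a contradiction argument built on the comparison/foliation principle for minimizers. Suppose, for the subgraph case, that $\varphi$ is a smooth function with $\Gamma_\varphi\cap B_r(y)\subset E$ (the case $\subset E^c$ is symmetric after relabelling, and the supergraph case is the same with the roles of sub/super interchanged), with $y\in\p E\cap\p\Gamma_\varphi$, but that the strict inequality $(1+\abs{D\varphi}^2)\Delta\varphi - D\varphi^T D^2\varphi\, D\varphi > 0$ holds at $y$. By continuity this strict inequality persists on a small neighbourhood of $y$, so after shrinking $r$ we may assume $\varphi$ is a strict subsolution of \eqref{MSE} on $B_r(y)$. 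The point is that the left-hand side of \eqref{MSE} is (up to the positive factor $(1+\abs{D\varphi}^2)^{-1}$) the mean curvature of the graph of $\varphi$ written as an elliptic operator; a strict sign means the graph of $\varphi$, suitably translated in the $x_n$-direction, is a strict barrier.

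First I would construct the competitor family. Translate the graph of $\varphi$ downward: set $\varphi_t(x') = \varphi(x') - t$ for $t\ge 0$, and let $\Gamma_{\varphi_t}$ be its subgraph. For $t$ large enough (comparable to $\osc\varphi$ on $B'_r(y')$) the graph of $\varphi_t$ lies entirely below $\p E$ inside a fixed ball, so $\Gamma_{\varphi_t}\cap B_r(y)$ is disjoint from $\p E$; here one uses that $\p E$ is closed (Definition~\ref{def:element}) and the density estimates to know $\p E$ is a genuine topological boundary. Now decrease $t$ to the first value $t_0\ge 0$ at which $\p\Gamma_{\varphi_{t_0}}$ touches $\p E$ at some interior point $z\in B_r(y)$; since at $t=0$ the graph passes through $y\in\p E$ with the subgraph inside $E$, we have $0\le t_0$ and the touching point $z$ exists, with $\Gamma_{\varphi_{t_0}}$ lying on one side of $\p E$ near $z$ — it is contained in $E$ because $\Gamma_\varphi\subset E$ and translating down only shrinks the subgraph.

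The heart of the argument is the contradiction at the touching point $z$. On one hand, since $\p E$ is a minimal surface and the smooth hypersurface $\{x_n=\varphi_{t_0}(x')\}$ touches it from inside $E$ at $z$, a standard one-sided comparison/replacement argument for perimeter minimizers forces the mean curvature of that smooth hypersurface at $z$ to have a sign compatible with minimality: replacing $E$ near $z$ by the region cut off by the barrier would strictly decrease perimeter unless the barrier fails to be a strict subsolution there. Concretely, if $\varphi_{t_0}$ were a strict subsolution of \eqref{MSE} at $z$, one could push the barrier slightly past $z$ and use it to build a competitor $F$ with $P(F)<P(E)$, contradicting minimality — this is exactly the classical fact that minimal surfaces cannot be touched from the appropriate side by a strict mean-curvature barrier. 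On the other hand, $\varphi_{t_0}$ has the same second-order jet as $\varphi$ (translation in $x_n$ does not change $D\varphi$ or $D^2\varphi$), and $z\in B_r(y)$ where we arranged $(1+\abs{D\varphi}^2)\Delta\varphi - D\varphi^T D^2\varphi\, D\varphi>0$; so $\varphi_{t_0}$ \emph{is} a strict subsolution at $z$. This is the desired contradiction, proving the first inequality of Definition~\ref{definition:viscosity:solution}; the supergraph case is identical after reflecting $x_n\mapsto -x_n$, which exchanges $E\leftrightarrow E^c$, sub/super-graphs, and the sign of the operator.

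I expect the main obstacle to be making the ``strict barrier vs.\ minimizer'' step fully rigorous without invoking regularity of $\p E$: one must produce the competitor $F$ purely in the BV/perimeter category. The cleanest route is to observe that, since the graph of $\varphi_{t_0}$ is a strict subsolution, its small inward translates $\{x_n = \varphi_{t_0}(x')+s\}$ for small $s>0$ foliate a neighbourhood of $z$ by hypersurfaces of strictly negative (relative) mean curvature on the $E$ side, hence are strict subsolutions of the minimal surface equation there; then the region $F$ obtained by adding to $E$ the thin slab between $\p\Gamma_{\varphi_{t_0}}$ and $\p\Gamma_{\varphi_{t_0}+s}$ (intersected with a small ball around $z$) has strictly smaller perimeter than $E$ by the divergence-theorem/calibration computation comparing the area of a graph to the flux of the associated vector field — this is precisely the computation underlying \cite{caffarelli1993elementary}, and citing it is legitimate since the statement we are proving is attributed there. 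The remaining bookkeeping — that $z$ is genuinely interior to $B_r(y)$ for $t_0$ in range, that $(E\setminus F)\cup(F\setminus E)\subset\subset B_1$, and that the density estimates prevent $\p E$ from having removable pieces near $z$ — is routine.
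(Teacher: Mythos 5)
The paper does not actually prove this theorem: it is stated as an import from \cite{caffarelli1993elementary} and used as a black box, so there is no internal proof to compare your argument against. What you have written is essentially a reconstruction of the argument of that reference (touch $\p E$ on one side by a smooth strict sub/supersolution of \eqref{MSE} and contradict minimality via the calibration field $X=\frac{(-D\varphi,1)}{\sqrt{1+\abs{D\varphi}^2}}$, whose divergence is, up to sign and a positive factor, the operator in \eqref{MSE}), and the architecture and the direction of the competitor (adding the thin slab between the graph and its small upward translate when the subgraph sits inside $E$ and the operator is strictly positive) are correct. Two points deserve more care than you give them. First, the sliding step is both unnecessary and mildly dangerous: the hypothesis of Definition~\ref{definition:viscosity:solution} already hands you an interior one-sided contact at $y$ itself, since $\Gamma_\varphi\cap B_r(y)\subset E$ and $y\in\p E\cap\p\Gamma_\varphi$, so you may take $t_0=0$ and $z=y$ outright; if instead you slide down and come back up, the first contact point as $t$ decreases may occur on $\p B_r(y)$, where there is no interior touching and the barrier argument does not apply, so the claim ``the touching point $z$ exists'' in the interior needs the observation that $t=0$ already works. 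Second, the calibration step requires the region $R$ between $\p E$ and the pushed-up graph to be compactly contained in the small ball around $z$ (otherwise the divergence theorem picks up an uncontrolled lateral flux on $\p B_\rho(z)$); mere first-order contact at $z$ does not give this, and the classical repair is to perturb $\varphi$ by a small concave quadratic centered at $z'$ so that the contact becomes second-order strict while the perturbed function remains a strict subsolution. Since you explicitly defer the calibration computation to \cite{caffarelli1993elementary} — exactly as the paper itself does for the whole theorem — these are presentational gaps in a sketch rather than errors, but they are the two places where a naive implementation of your outline would break.
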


A remark on the constants appearing in the Harnack inequality, Theorem \ref{harnack}, is in order. 
\begin{remark}\label{remark:assumption}
	A careful computation of the constants $\eps_1(n)$ and $\eta$ appearing in Theorem \ref{harnack} reveals that these constants are very small. We will therefore use the (crude) upper bounds $\eps_1 \leq \frac{1}{4}$ and $\eta\leq \frac{1}{5}$ to simplify some our estimates. In particular, these bounds imply that $-\log_2(1-\eta) \leq \frac{1}{4}$.
\end{remark}

Finally, we will make use of the following estimate later in the proof. While it's proof is standard and can be found for instance in \cite[Proposition 4.13]{cafcabtextbook}, we restate it here as we need the constant appearing in the estimate \eqref{eqn:boundary:regularity}.
\begin{proposition}\label{prop:holder}
	Suppose that $u \in C^0(B_1)$ satisfies 
	\begin{equation*}
		\begin{cases}
			\Delta u = 0 &\text{ in } B_1\\
			u=g &\text{on } \p B_1,
		\end{cases}
	\end{equation*}
	for some $ \varphi \in C^{\sigma} (\p B_1)$. Then $u \in C^{\frac{\sigma}{2}}(\overline{B}_1)$ with 
	\begin{equation}\label{eqn:boundary:regularity}
		\norm{u}_{C^{0,\frac{\sigma}{2}(\overline{B}_1)}}\leq 2n 5^{\sigma} \norm{g}_{C^{0,\sigma}(\p B_1)}.
	\end{equation}
\end{proposition}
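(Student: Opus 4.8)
The plan is to establish the interior and boundary Hölder estimates separately and then combine them, tracking constants at each step. First I would recall the standard interior estimate for harmonic functions: if $\Delta u = 0$ in $B_r(x_0) \subset B_1$, then $|Du(x_0)| \leq \frac{n}{r}\osc_{B_r(x_0)} u$, which follows from the mean value property applied to the partial derivatives $\p_i u$ together with the divergence theorem. Applying this at a point $x \in B_1$ with $r = \dist(x,\p B_1)$ and using the maximum principle to control $\osc_{B_r(x)} u$ by $\osc_{\p B_1} g \leq \diam(\p B_1)^{\sigma}\norm{g}_{C^{0,\sigma}(\p B_1)} = 2^{\sigma}\norm{g}_{C^{0,\sigma}(\p B_1)}$, I obtain the gradient bound $|Du(x)| \leq 2^{\sigma} n \norm{g}_{C^{0,\sigma}(\p B_1)} \dist(x,\p B_1)^{-1}$. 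This weighted gradient bound is the engine for the interior Hölder seminorm: given two points $x,y \in B_1$, one splits into the case where $|x-y|$ is small compared to the distances to the boundary (integrate $Du$ along the segment, which stays in a region where $\dist(\cdot,\p B_1) \gtrsim \max(\dist(x,\p B_1),\dist(y,\p B_1))$) and the case where it is large (use the boundary estimate below after moving to nearby boundary points).

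For the boundary estimate I would use a barrier argument: fix $z_0 \in \p B_1$ and show $|u(x) - g(z_0)| \leq C\norm{g}_{C^{0,\sigma}(\p B_1)}|x-z_0|^{\sigma/2}$ for $x \in B_1$. After subtracting the constant $g(z_0)$ we may assume $g(z_0) = 0$, so $|g(z)| \leq \norm{g}_{C^{0,\sigma}}|z-z_0|^{\sigma} \leq \norm{g}_{C^{0,\sigma}}(2|x-z_0|)^{\sigma}$ whenever $|z - z_0| \leq 2|x-z_0|$, while trivially $|g(z)| \leq 2^\sigma \norm{g}_{C^{0,\sigma}}$ on the rest of $\p B_1$. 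The standard barrier is a function of the form $A|x-z_0|^{\sigma} + B|x - z_1|^{-(n-2)}$ (or a logarithmic analogue when $n=2$) where $z_1$ is the center of an exterior tangent ball; comparing $u$ with $\pm$(barrier) via the maximum principle and optimizing the constants yields the claimed $|x - z_0|^{\sigma/2}$ decay. The passage from exponent $\sigma$ on the data to $\sigma/2$ on the solution is exactly where the square root enters, and this loss is genuine.

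The main obstacle is bookkeeping the constant so that the final bound reads $2n\,5^{\sigma}\norm{g}_{C^{0,\sigma}(\p B_1)}$ rather than some larger universal multiple of it. The cleanest route is to treat the $C^{0,\sigma/2}$-norm as $\norm{u}_{L^\infty} + [u]_{C^{0,\sigma/2}}$; the sup norm is bounded by $\norm{g}_{L^\infty} \leq \norm{g}_{C^{0,\sigma}}$ via the maximum principle, so essentially all the work is in the seminorm. I would combine the interior and boundary pieces by the following dichotomy for $x, y \in \overline{B}_1$: let $d_x = \dist(x,\p B_1)$, $d_y = \dist(y,\p B_1)$, assume $d_x \leq d_y$; if $|x-y| \geq d_x$, route through the nearest boundary point of $x$ using the boundary estimate twice and the triangle inequality for $|\cdot|^{\sigma/2}$; if $|x - y| < d_x$, use the interior gradient bound along the segment from $x$ to $y$, on which $\dist(\cdot,\p B_1) \geq d_x - |x-y|$, and interpolate against the already-established oscillation bound to recover the $\sigma/2$ power. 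Choosing the geometric comparison radii (e.g.\ working at scale $2|x-z_0|$, which produces the $2^\sigma$, and the factor $n$ from the gradient estimate, and absorbing the barrier constants into $5^\sigma$) gives the stated inequality \eqref{eqn:boundary:regularity}. Since the proposition itself points to \cite[Proposition 4.13]{cafcabtextbook}, I would in practice just cite that reference and note that the constant $2n\,5^{\sigma}$ is read off from its proof, rather than reproducing the barrier computation in full.
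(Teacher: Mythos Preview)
The paper does not actually prove this proposition: it simply states the result and points to \cite[Proposition 4.13]{cafcabtextbook} for the proof, restating it only because the explicit constant $2n\,5^{\sigma}$ is needed later. Your final sentence---just cite that reference and note that the constant is read off from its proof---is therefore exactly what the paper does, and the preceding sketch (interior gradient estimate plus boundary barrier) is a reasonable outline of the standard argument but is more than the paper itself provides.
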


\section{Proof of Theorem \ref{improvementofflatness}}

\begin{lemma}\label{stretchgraph}
	Suppose $\p E$ is a minimal surface with $\p E \cap B_1 \subset \{\abs{x_n} \leq \eps\}$ for some $0 < \eps \leq \frac{1}{8}\eps_1(n)$ and that $0 \in \p E$.There exists a multivalued graph $A:B_1'\to 2^{\R}$ such that $\abs{A}\leq 1$, $0\in A(0)$ and $\p E = \eps A$. Moreover there holds for any $x',y' \in B_{3/4}'$ that
	\begin{equation}
		\label{almostholder}
		\abs{A(x')-A(y')}\leq C_1(\abs{x'-y'}+C_2\eps^{\gamma})^{\alpha}
	\end{equation}
	where $\alpha = -\log_2(1-\eta)$, $\gamma = \frac{1}{1-\alpha}$, $C_1=2^{4+4\alpha}$ and $C_2 = 2^{\frac{3\alpha}{1-\alpha}-1}\eps_1^{-\gamma}$ are all universal constants.
\end{lemma}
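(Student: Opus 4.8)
The plan is to obtain the Hölder-type bound \eqref{almostholder} by iterating the Harnack inequality (Theorem \ref{harnack}) at dyadic scales, starting from the hypothesis $\partial E \cap B_1 \subset \{\abs{x_n} \le \eps\}$. First I would set up the multivalued graph: since $\eps \le \tfrac18 \eps_1$ and $\partial E \cap B_1 \subset \{\abs{x_n} \le \eps\}$, the rescaling $A := \eps^{-1}(\partial E)$ is a multivalued graph over $B_1'$ with $\abs{A} \le 1$ and $0 \in A(0)$ because $0 \in \partial E$. (Strictly, one should first note that the density estimates prevent $\partial E$ from having ``holes'' above points of $B_1'$, so that $A(x')$ is nonempty for $x'$ in the relevant region; this is where the flatness $\eps \le \tfrac18 \eps_1$ is used, leaving enough room to apply Theorem \ref{harnack} in smaller balls.)

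The core is a \emph{decay-of-oscillation} statement: I would show that for a point $x_0' \in B_{3/4}'$ and a radius $r$ with $\eps^\gamma \lesssim r \le$ (some fixed fraction), one has
\begin{equation*}
	\osc_{B_r'(x_0')} A \;\le\; C r^\alpha .
\end{equation*}
This follows by applying Theorem \ref{harnack} repeatedly on the balls $B_{2^{-k}}(x_0, t_k)$ (after translating and rescaling so that the relevant piece of $\partial E$ sits in a slab of the appropriate width), each application shrinking the slab width by the factor $(1-\eta)$; after $k$ steps the slab has width $\eps \eta$-contracted $k$ times, i.e.\ roughly $2^{-k\alpha}$ times the original once one converts the geometric factor $(1-\eta)^k$ into the power $2^{-k\alpha}$ via $\alpha = -\log_2(1-\eta)$. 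The iteration can only be continued while the slab width stays above the scale $\eps$ (below that the surface may no longer be trapped in a controlled slab relative to the ball radius), which is exactly what produces the additive error term $C_2 \eps^\gamma$: the exponent $\gamma = 1/(1-\alpha)$ arises from balancing $2^{-k} \sim (\text{slab width})$, i.e.\ $2^{-k} \sim \eps \cdot 2^{-k\alpha}$, giving $2^{-k} \sim \eps^{1/(1-\alpha)} = \eps^\gamma$ as the smallest admissible radius. Passing from the oscillation estimate on balls to the pointwise bound $\abs{A(x')-A(y')} \le C_1(\abs{x'-y'} + C_2\eps^\gamma)^\alpha$ is then a routine chaining argument: for $\abs{x'-y'} \ge \eps^\gamma$ apply the oscillation bound on $B'_{2\abs{x'-y'}}(x')$; for $\abs{x'-y'} < \eps^\gamma$ use the bound at the floor scale $r \sim \eps^\gamma$ and absorb into the additive term.

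The bookkeeping of constants is where I would be most careful, since the whole point of the note is to track them: each rescaling $\partial E \mapsto 2^k \partial E$ shifts and dilates, and to apply Theorem \ref{harnack} one needs the rescaled slab width to remain $\le \eps_1(n)$, which is guaranteed by the initial choice $\eps \le \tfrac18 \eps_1$ together with the monotone decrease of the slab at each step; the powers of $2$ in $C_1 = 2^{4+4\alpha}$ and the factor $2^{3\alpha/(1-\alpha)-1}\eps_1^{-\gamma}$ in $C_2$ come from the geometric series of radii, the conversion between centering at $x_0'$ versus $0$, and the conversion of the ``stopping scale'' $2^{-k_0}$ into $\eps^\gamma$ with explicit multiplicative loss. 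The main obstacle is not any single inequality but making the inductive slab-contraction argument rigorous when $\partial E$ is only a viscosity solution given as a (a priori wild) multivalued graph rather than a genuine graph — one must verify at each scale that Theorem \ref{harnack}'s hypothesis ($\partial E$ trapped in a slab inside a ball) is genuinely met after rescaling and translating to a point of $\partial E$, and that the density estimates keep $A$ single-valued enough for ``$\osc A$'' to behave like the oscillation of a function. Once the iteration is legitimate, everything else is summing a geometric series.
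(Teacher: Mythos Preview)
Your proposal is correct and follows essentially the same route as the paper: iterate Theorem~\ref{harnack} at dyadic scales around a fixed point $x_0 \in \partial E \cap B_{3/4}$ to obtain $\osc_{B'_{2^{-1-m}}(x_0')} A \le 2(1-\eta)^{m-2}$, identify the stopping scale $2^{-\tilde M -1}\le C_2\eps^\gamma$ from the constraint $2^m\eps(1-\eta)^{m-3}\le\eps_1$, and then convert to \eqref{almostholder} by reading off the oscillation at the dyadic radius containing $y'$ (with a short triangle-inequality chain of at most five points to cover all of $B'_{3/4}$). One small correction: the iteration stops not because ``the slab width drops below the scale $\eps$'' but because the slab-width-to-radius ratio $2^m\eps(1-\eta)^{m-3}$ grows past $\eps_1$; your balancing heuristic $2^{-k}\sim\eps\cdot 2^{-k\alpha}$ nonetheless lands on the right stopping scale $\eps^\gamma$, and the concerns you raise about density estimates and single-valuedness are unnecessary since the paper's definition of $\osc$ for multivalued graphs already handles this.
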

\begin{proof}
	The first part of the lemma is easily seen by setting 
	\begin{equation*}
		A(x') = \left\{\frac{x_n}{\eps}; \hspace{1mm} (x',x_n)\in \p E\right\}.
	\end{equation*}
	To deduce \eqref{almostholder} we iteratively apply Theorem  \ref{harnack} to each point in $\p E \cap B_{3/4}$. To this end, we fix some $x_0 \in B_{3/4}$ and note that by assumption
	\begin{equation*}
		\abs{\left(x-x_0\right)\cdot e_n}\leq 2\eps,  \text{	for all  $x\in \p E \cap B_{1/4}(x_0)$.}
	\end{equation*}
	 Since $8\eps \leq \eps_1$ we can apply Theorem \ref{harnack} in $B_{1/4}(x_0)$ and obtain 
	\begin{equation*}
		\p E \cap B_{1/8}(x_0) \subset \left\{\abs{\left(x-x_0\right)\cdot e_n}\leq 2\eps (1-\eta)\right\}.
	\end{equation*}
	 Iterating this procedure, we obtain for all $m\geq 3$ satisfying
	\begin{equation}
		\label{mcondition}
		2^m\eps(1-\eta)^{m-3}\leq \eps_1
	\end{equation}
	that 
	\begin{equation}
		\label{oscboundarydecay}
		\p E \cap B_{2^{-m}}(x_0) \subset \left\{\abs{\left(x-x_0\right)\cdot e_n}\leq 2\eps (1-\eta)^{m-2}\right\}.
	\end{equation}
	Using now \eqref{mcondition} and Remark \ref{remark:assumption} we have that 
	\begin{align*}
		2\eps(1-\eta)^{m-2} = \eps_1^{-1}\eps(1-\eta)^{m-3}(2\eps_1(1-\eta)) \leq 2^{-m-1},
	\end{align*}
	and so
	\begin{equation*}
		\p E \cap \left\{\abs{x'-x_0'}\leq 2^{-1-m}\right\}  \subset B_{2^{-m}}(x_0).
	\end{equation*}
	Hence as a consequence of \eqref{oscboundarydecay} we have that
	\begin{equation*}
		A \cap \left\{\abs{x'-x_0'}\leq 2^{-1-m}\right\} \subset \left\{\abs{\left(x-x_0\right)\cdot e_n}\leq 2 (1-\eta)^{m-2}\right\},
	\end{equation*}
	which implies
	\begin{equation}
		\underset{{B'_{2^{-1-m}}(x_0')}}{\osc} A \leq 2(1-\eta)^{m-2}.
		\label{oscdecay}
	\end{equation}
	Moreover using \eqref{mcondition} we deduce that the oscillation decay of \eqref{oscdecay} holds for $m\leq \lfloor M(\eps,n) \rfloor =: \tilde{M}$ where  $M(\eps,n)$ achieves the equality in \eqref{mcondition}. We note that with this maximal value of $m$, we have that $2^{-\tilde{M}-1} \leq 2^{\frac{3\alpha}{1-\alpha}-1}\eps_1^{-\gamma}\eps^{\gamma} =C_2\eps^{\gamma}$.
	
	We now show that this implies that $A$ is H\"older continuous at $x_0'$ outside of $B_{C_2\eps^{\gamma}}(x_0')$. Fix any $y' \in B_{1/4}(x_0')\backslash B_{C_2\eps^{\gamma}}(x_0')$ and choose $m\in \N$ so that
	\begin{equation*}
		2^{-m-1} \leq \abs{x_0'-y'} \leq 2^{-m}.
	\end{equation*}
	Since $m \leq \tilde{M}$ we have by \eqref{oscdecay} that 
	\begin{align*}
		\abs{A(x_0')-A(y')}
		&\leq \underset{B'_{2^{-m}(x_0')}}{\osc} A\\
		&\leq 2(1-\eta)^{m-3}\\
		&\leq 2^{-\alpha(m-3)+1}\\
		&\leq \frac{C_1}{6} \abs{x_0'-y'}^{\alpha}.
	\end{align*}
	If on the other hand $y'\in B_{C_2\eps^{\gamma}}(x_0')$ we note that $2^{-\tilde{M}-1}\leq \abs{x'-y'} + C_2\eps^{\gamma}$ and so using the oscillation decay in $B_{2^{-\tilde{M}}}(x_0')$ we obtain
	\begin{equation*}
		\abs{A(x_0')-A(y')}\leq \frac{C_1}{6}\left(\abs{x'-y'}+C_2\eps^{\gamma}\right)^{\alpha}.
	\end{equation*}
	In particular we have shown \eqref{almostholder} for all $y'\in B'_{1/4}(x_0')$. Taking any $y' \in B'_{3/4}\backslash B'_{1/4}(x_0')$ we can find a sequence of $I\leq 5 $ points $\{x'_i\}_{1\leq i\leq I} \subset B'_{3/4}$ such that $x_I'=y'$ and $\abs{x'_i - x_{i+1}'}\leq \frac{1}{4}$ for $0\leq i \leq 5$. Applying the triangle inequality then establishes \eqref{almostholder}.
\end{proof}
\begin{remark}\label{remark:value:alpha}
	By Remark \ref{remark:assumption}, the constants $\gamma$ and $C_2$ are well defined. Moreover, using the bound $\alpha \leq \frac{1}{4}$ we find that $C_2\leq\eps_1^{-\gamma}$.
\end{remark}

Recalling that $\p E$ is closed, we now define the lower semi-continuous function
\begin{equation*}
	u^{-}(x') =\eps \inf A(x'),
\end{equation*}
as well as the upper semi-continuous function
\begin{equation*}
	u^+(x') = \eps \sup A(x').
\end{equation*}
Geometrically, these correspond to the lower and upper parts of the minimal surface. Since $0 \in \p E$ we can assume that $u^{-}(0) = 0$ (otherwise we could translate the minimal surface and complete the proof under the assumption that $\p E \cap B_1 \subset \{\abs{x_n}\leq 2\eps\}$).

\begin{lemma}\label{holdergraph}
	Under the assumptions of Lemma \ref{stretchgraph} there exists some $u \in C^{0,\alpha}(B_{3/4}')$ with $[u]_{C^{0,\alpha}(B_{3/4}')}\leq 2^{4+5\alpha}$ and $u(0)=0$, such that
	\begin{equation}
		\label{sandwhich}
		\eps u(x') \leq u^-(x') \leq u^+(x')\leq \eps u(x')+ C_3\eps^{1+\gamma\alpha}
	\end{equation}
	where $C_3 = 2^{4+5\alpha}\eps_1^{-\gamma\alpha}$. In particular we have that
	\begin{equation}
		\label{eqn:difference:A:u}
		\norm{A-u}_{L^{\infty}(B_{3/4}')} \leq C_3\eps^{\gamma\alpha},
	\end{equation}
	and
	\begin{equation}\label{eqn:holderbound}
		\norm{u}_{C^{0,\alpha}(\overline{B'}_{1/2})}\leq 2^{6+5\alpha}.
	\end{equation}
\end{lemma}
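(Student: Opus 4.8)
The plan is to construct $u$ as the inf-convolution of the lower graph $u^-$ at an appropriately chosen scale, exploiting the "almost H\"older" estimate \eqref{almostholder} from Lemma \ref{stretchgraph} to control both the regularity of $u$ and its distance to $u^\pm$. Concretely, set
\[
  u(x') = \frac{1}{\eps}\inf_{y' \in B'_{3/4}}\Bigl( u^-(y') + \tfrac{C_1}{\delta^{\,1-\alpha}}\,\abs{x'-y'}\Bigr)
\]
(or a power-type kernel $\abs{x'-y'}^\alpha$ with an auxiliary sup-convolution, depending on which gives the cleanest constants), where $\delta \sim C_2\eps^\gamma$ is the bad scale below which \eqref{almostholder} degenerates. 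The point of the inf-convolution is that it is automatically $C^{0,\alpha}$ with the seminorm controlled by the convolution kernel, and it lies below $u^-$; the estimate \eqref{almostholder} then forces it to lie not too far below.

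First I would check the two-sided bound \eqref{sandwhich}. The inequality $\eps u \le u^- \le u^+$ on the left is built in: $\eps u \le u^-$ by taking $y'=x'$ in the infimum (and one must note $u^- \le u^+$ trivially from the definition of $\inf A \le \sup A$). For the upper bound $u^+ \le \eps u + C_3\eps^{1+\gamma\alpha}$, I would fix $x'$, let $y'$ be a near-minimizer in the definition of $u(x')$, and use \eqref{almostholder} to compare $u^-(x')$ (hence $u^+(x')$, via the oscillation bound $A(x')-A(y') \le C_1(\abs{x'-y'}+C_2\eps^\gamma)^\alpha$ which bounds $\sup A(x') - \inf A(y')$) against $u^-(y') + (\text{kernel})$. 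The kernel is chosen precisely so that $C_1(\abs{x'-y'}+C_2\eps^\gamma)^\alpha \le (\text{kernel at } |x'-y'|) + C_1(C_2\eps^\gamma)^\alpha$ up to the concavity of $t \mapsto t^\alpha$; the leftover term is $C_1 (C_2\eps^\gamma)^\alpha \sim C_1 C_2^\alpha \eps^{\gamma\alpha}$, which after multiplying through by $\eps$ and using $C_2 \le \eps_1^{-\gamma}$ (Remark \ref{remark:value:alpha}) and $\alpha \le 1/4$ gives the claimed $C_3 = 2^{4+5\alpha}\eps_1^{-\gamma\alpha}$. Dividing \eqref{sandwhich} by $\eps$ and recalling $u^\pm = \eps \sup/\inf A$ immediately yields \eqref{eqn:difference:A:u}, since both $A(x')$ and $u(x')$ lie in the interval $[u^-(x')/\eps,\, u^-(x')/\eps + C_3\eps^{\gamma\alpha}]$.

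Next, the H\"older seminorm bound $[u]_{C^{0,\alpha}(B'_{3/4})} \le 2^{4+5\alpha}$: an inf-convolution with kernel $L\abs{x'-y'}$ has Lipschitz constant $\le L$, which gives $C^{0,1}$ hence $C^{0,\alpha}$ on a bounded set, but to land the constant one wants the $\abs{x'-y'}^\alpha$-type kernel so that the seminorm is literally the kernel constant; here $L = C_1/\delta^{1-\alpha}$ with $\delta = C_2\eps^\gamma$ would blow up, so instead one uses the concave kernel $C_1(\abs{x'-y'}+C_2\eps^\gamma)^\alpha - C_1(C_2\eps^\gamma)^\alpha$, whose modulus of continuity is $\le C_1 t^\alpha$, giving seminorm $\le C_1 = 2^{4+4\alpha} \le 2^{4+5\alpha}$. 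Finally \eqref{eqn:holderbound}: on $\overline{B'}_{1/2}$ the full norm is the seminorm times $(\diam)^\alpha \le 1$ plus $\|u\|_{L^\infty}$; since $u(0)=0$ and $[u]_{C^{0,\alpha}} \le 2^{4+5\alpha}$ with $\diam B'_{3/4} = 3/2$, we get $\|u\|_{L^\infty(B'_{1/2})} \le 2^{4+5\alpha}(3/2)^\alpha \le 2^{5+5\alpha}$, so $\|u\|_{C^{0,\alpha}} \le 2^{5+5\alpha} + 2^{4+5\alpha} \le 2^{6+5\alpha}$.

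The main obstacle I expect is not the existence of the approximation but the bookkeeping of constants: one must choose the inf-convolution kernel so that \emph{simultaneously} the H\"older seminorm comes out exactly $\le 2^{4+5\alpha}$ and the error term in \eqref{sandwhich} comes out exactly $\le C_3\eps^{1+\gamma\alpha}$ with $C_3 = 2^{4+5\alpha}\eps_1^{-\gamma\alpha}$. This forces the concavity inequality $(a+b)^\alpha \le a^\alpha + b^\alpha$ (valid for $\alpha \le 1$) to be used in the sharp direction, and requires carefully tracking the interplay between $C_1 = 2^{4+4\alpha}$, $C_2 \le \eps_1^{-\gamma}$, the factor $\eps$ from the rescaling $\p E = \eps A$, and the crude bounds $\eps_1 \le 1/4$, $\eta \le 1/5$, $\alpha \le 1/4$ from Remark \ref{remark:assumption}. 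A secondary subtlety is that $u^-$ is only lower semicontinuous and defined pointwise on $B'_{3/4}$, so one should make sure the inf-convolution is taken over $B'_{3/4}$ (or a slightly larger set intersected with it) and that the restriction of the resulting $u$ to $\overline{B'}_{1/2}$ inherits the stated bounds without boundary effects — this is why \eqref{eqn:holderbound} is stated on the smaller ball.
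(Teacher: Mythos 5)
Your proposal is correct and follows essentially the same route as the paper: an inf-convolution of $u^-/\eps$ with a concave kernel matched to the almost-H\"older modulus \eqref{almostholder}, with the sandwich bound coming from \eqref{almostholder} and the seminorm from the kernel's modulus of continuity (the paper takes the kernel $2^{\alpha}C_1\abs{x'-y'}^{\alpha}$ and absorbs the $C_2\eps^{\gamma}$ shift via $(a+b)^{\alpha}\leq a^{\alpha}+b^{\alpha}$, which is interchangeable with your shifted kernel). The only cosmetic difference is in \eqref{eqn:holderbound}, where the paper bounds $\norm{u}_{L^{\infty}}$ by $1+C_3\eps^{\gamma\alpha}$ using $\abs{A}\leq 1$ rather than via $u(0)=0$ and the seminorm; both give the stated constant.
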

\begin{proof}
	We define the function
	\begin{equation*}
		u(x_0') = \inf_{x'\in B'_{3/4}}\left\{\frac{u^-(x')}{\eps} + 2^{\alpha}C_1\abs{x'-x_0'}^{\alpha}\right\}, \hspace{2mm} x_0' \in B'_{3/4}.
	\end{equation*}
	To see that $u \in C^{0,\alpha}$ take $x_0', x_1' \in B'_{3/4}$ and let $x_0'^*\in \overline{B'}_{3/4}$ such that $u(x_0')=\frac{u^-(x_0'^*)}{\eps} + 2^{\alpha}C_1\abs{x_0'^*-x_0'}^{\alpha}$. Then we observe that
	\begin{align*}
		u(x_1') - u(x_0')
		&\leq \left(\frac{u^-(x_0'^*)}{\eps}+ 2^{\alpha}C_1\abs{x_0'^*-x_1'}^{\alpha}\right)-\left(\frac{u^-(x_0'^*)}{\eps} + 2^{\alpha}C_1\abs{x_0'^*-x_0'}^{\alpha}\right)\\
		&\leq 2^{\alpha}C_1\left(\abs{x_0'^*-x_1'}^{\alpha}-\abs{x_0'^*-x_0'}^{\alpha}\right)\\
		&\leq 2^{\alpha}C_1\abs{x_0'-x_1'}^{\alpha}.
	\end{align*}
	A symmetrical argument yields  $u(x_0') - u(x_1')\leq 2^{\alpha}C_1\abs{x_0'-x_1'}^{\alpha}$ which establishes $[u]_{C^{0,\alpha}(B_{3/4}')}\leq 2^{4+5\alpha}$. The fact that $u(0)=0$ follows immediately from $u^-(0)=0$ and the definition of $u$. 
	
	The lower bound in \eqref{sandwhich} follows directly from the definition of $u$ and multiplying by $\eps$. For the upper bound we first observe that \eqref{almostholder} yields
	\begin{equation*}
		\frac{u^-(x')}{\eps} \geq \frac{u^+(x_0')}{\eps} - C_1(\abs{x'-x_0'}+ C_2\eps^{\gamma})^{\alpha}
	\end{equation*}
	for any $x_0',x'\in B'_{3/4}$. Consequently we have that 
	\begin{align*}
		\frac{	u^-(x')}{\eps}+ 2^{\alpha}C_1\abs{x'-x_0'}^{\alpha}
		&\geq \frac{u^+(x_0')}{\eps} - C_1(\abs{x'-x_0'}+ C_2\eps^{\gamma})^{\alpha} + 2^{\alpha}C_1\abs{x'-x_0'}^{\alpha}\\
		&\geq  \frac{u^+(x_0')}{\eps} - 2^{\alpha}C_1C_2^{\alpha}\eps^{\gamma\alpha}.
	\end{align*}
	Taking now the infimum and multiplying by $\eps$ yields the upper bound in \eqref{sandwhich}. By subtracting $\eps u$ in \eqref{sandwhich} and then dividing by $\eps$ we obtain \eqref{eqn:difference:A:u}. 
	
	The estimate \eqref{eqn:holderbound} follows from \eqref{eqn:difference:A:u}, the fact that $\abs{A} \leq 1$ and the estimate $[u]_{C^{0,\alpha}(B_{3/4}')}\leq 2^{4+5\alpha}$. Indeed, since $\eps \leq \eps_1$, we have that
	\begin{align*}
		\norm{u}_{C^{0,\alpha}(\overline{B'}_{1/2})} 
		&\leq \norm{u}_{L^{\infty}(B_{3/4}')}+ [u]_{C^{0,\alpha}(B_{3/4}')}\\
		&\leq (1 + C_3\eps^{\gamma\alpha}) + 2^{4+5\alpha}\\
		&\leq 2^{4+5\alpha}(2^{-4-5\alpha} + \eps_1^{-\gamma\alpha}\eps^{\gamma\alpha}) + 2^{4+5\alpha}\\
		&\leq 2^{6+5\alpha}.
	\end{align*}
\end{proof}
We must now understand what properties of $u$ we can deduce from the minimal surface equation that $\p E$ satisfies. To this end, we define the universal constant $r=\eps^{\frac{\gamma\alpha}{4}}$ and we note that $\frac{\gamma\alpha}{4} \leq \frac{1}{12}$.

\begin{lemma}\label{almostviscosity}
	Suppose $\varphi \in C^{\infty}(B_{1/2}')$  touches $u+ C_3 \eps^{\gamma\alpha}$ from above at some $x_0' \in B'_{1/2-2r}$. If $\max\{\sup_{i,j} \abs{D_{ij}\varphi}, \abs{D\varphi},\abs{D^2\varphi}\} \leq \eps^{-\frac{1}{2}}$ in $B'_{1/2-r}$ for some $\eps \leq\left(2^{-6-5\alpha} \eps_1^{\gamma\alpha} \right)^{\frac{2}{1+\gamma\alpha}}$, then there exits $x'_1\in B'_{r}(x'_0)$ so that
	\begin{equation}
		\Delta \varphi (x'_1) \geq - C_4\eps^{\frac{\gamma\alpha}{2}},
		\label{viscosityequation}
	\end{equation}
	where $C_4 = 2^{8+5\alpha}n\eps_1^{-\gamma\alpha}$.
\end{lemma}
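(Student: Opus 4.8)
The plan is to convert the fact that $\varphi$ touches $u + C_3\eps^{\gamma\alpha}$ from above at $x_0'$ into a geometric touching statement for the minimal surface $\partial E$, and then invoke the viscosity property of $\partial E$ (Definition \ref{definition:viscosity:solution}). First I would use the sandwich estimate \eqref{sandwhich} from Lemma \ref{holdergraph}: since $\eps u \le u^+ \le \eps u + C_3\eps^{1+\gamma\alpha}$, the rescaled graph $\eps\varphi$ sits above $u^+$ near $x_0'$, i.e.\ $\eps\varphi$ lies above the upper part of $\partial E$, and it touches (or nearly touches) at the point over $x_0'$. The obstruction is that $\varphi$ touches $u + C_3\eps^{\gamma\alpha}$, not $u^+/\eps$, so the touching for $\partial E$ is only approximate: at $x_0'$ the graph of $\eps\varphi$ could be as far as $C_3\eps^{1+\gamma\alpha}$ above $\partial E$ in the vertical direction. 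I would absorb this gap by sliding $\eps\varphi$ downward by $C_3\eps^{1+\gamma\alpha}$ and noting that because $\varphi$ is smooth with bounded second derivatives ($\le \eps^{-1/2}$), the vertical slide of size $C_3\eps^{1+\gamma\alpha}$ corresponds, after the quadratic comparison, to a horizontal perturbation of order $\sqrt{C_3}\,\eps^{(1+\gamma\alpha)/2}\cdot \eps^{1/4}$, which is comfortably smaller than $r = \eps^{\gamma\alpha/4}$ once $\eps$ is below the stated threshold; this is what forces the hypothesis $\eps \le (2^{-6-5\alpha}\eps_1^{\gamma\alpha})^{2/(1+\gamma\alpha)}$ and explains why the conclusion gives a point $x_1' \in B_r'(x_0')$ rather than exactly $x_0'$.

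Concretely, once $\eps\varphi$ (slid down appropriately) lies above the super-graph portion of $\partial E$ and touches it at some point $y \in \partial E$ with $y' \in B_r'(x_0')$, the sub-graph $\Gamma_{\eps\varphi}$ locally lies inside $E^c$ (or $E$, depending on orientation) in a small ball around $y$, so Definition \ref{definition:viscosity:solution} applies with the test function $\eps\varphi$ and yields
\begin{equation*}
	(1+\abs{D(\eps\varphi)}^2)\Delta(\eps\varphi) - D(\eps\varphi)^T D^2(\eps\varphi) D(\eps\varphi) \le 0
\end{equation*}
at $x_1' := y'$, where the sign is the one making $\Delta\varphi$ bounded below. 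Dividing by $\eps$ and rearranging,
\begin{equation*}
	\Delta\varphi(x_1') \ge -\frac{\abs{D(\eps\varphi)}^2 \Delta\varphi(x_1') - D(\eps\varphi)^T D^2(\eps\varphi)D(\eps\varphi)/\eps}{1+\abs{D(\eps\varphi)}^2},
\end{equation*}
and the right-hand side is controlled using $\abs{D\varphi}, \abs{D^2\varphi} \le \eps^{-1/2}$: each bad term carries a factor $\eps^2 \cdot \eps^{-1/2}\cdot \eps^{-1/2} \cdot \eps^{-1} = 1$ of $\eps$'s that combine to leave something of order $\eps$ times a power-of-$2$-and-$n$ constant — but I must be careful, since the naive bound $|D\varphi|^2|\Delta\varphi| \lesssim \eps^{-1}$ after dividing by $\eps$ gives only $O(1)$, not $O(\eps^{\gamma\alpha/2})$. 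The resolution is that $Du$, hence $D\varphi$ at the touching point, is itself small: from the $C^{0,\alpha}$ bound on $u$ together with the closeness to a harmonic function one expects $|D\varphi(x_1')| \lesssim \eps^{\gamma\alpha/4} = r$ at the contact point (the graph is $\eps^{\gamma\alpha}$-flat, so its "slope" is controlled by the flatness). Feeding $|D\varphi(x_1')| \lesssim r = \eps^{\gamma\alpha/4}$ and $|D^2\varphi| \le \eps^{-1/2}$ into the elliptic operator gives $|D\varphi|^2|\Delta\varphi|/\eps \lesssim \eps^{\gamma\alpha/2}\eps^{-1/2}/\eps$, which still is not obviously $\eps^{\gamma\alpha/2}$; so the exact bookkeeping of which quantity is $\eps^{-1/2}$ versus $\eps^{-1}$ versus $r$ is the crux, and I would track it carefully to land exactly on $C_4\eps^{\gamma\alpha/2}$ with $C_4 = 2^{8+5\alpha}n\eps_1^{-\gamma\alpha}$.

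I expect the main obstacle to be precisely this final bookkeeping step: producing the contact point $x_1'$ with the right horizontal displacement bound $r$ from the smooth test function's quadratic behaviour, and then showing that the full minimal surface operator (not just the Laplacian) evaluated on $\eps\varphi$ contributes only an error of size $C_4\eps^{\gamma\alpha/2}$ — this requires knowing that the gradient of the test function at the contact point is genuinely small (of order $r$ or better), which in turn should follow from combining the flatness hypothesis $\partial E \subset \{|x_n| \le \eps\}$ with the $C^{0,\alpha}$ estimate \eqref{eqn:holderbound} and an interior-gradient-type argument, balanced against the choice $r = \eps^{\gamma\alpha/4}$. The geometric part (translating "touches from above" for $u + C_3\eps^{\gamma\alpha}$ into "$\eps\varphi$ is a valid viscosity test surface for $\partial E$ at a nearby point") is routine once the slide-and-perturb trick is set up, but needs the smoothness and second-derivative bounds on $\varphi$ to guarantee the horizontal error stays within $B_r'(x_0')$.
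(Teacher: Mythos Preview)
Your overall architecture is right: perturb $\eps\varphi$ so that it genuinely touches $u^+$ at some nearby point, then apply the viscosity inequality for $\partial E$. But the execution has two real gaps.

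First, the localization of the contact point. Merely ``sliding $\eps\varphi$ down'' does not force the new contact point to lie in $B_r'(x_0')$; after sliding, the graph could touch $u^+$ far away. The paper handles this by adding a penalizing paraboloid: one minimizes
\[
\eps\varphi(x') + \tfrac{\delta}{2}|x'-x_0'|^2 - u^+(x')
\]
over $\overline{B_r'(x_0')}$, and chooses $\delta = 4C_3\eps^{1+\gamma\alpha/2}$ so that the minimum cannot occur on $\partial B_r'(x_0')$ (because $\tfrac{\delta}{2}r^2 > C_3\eps^{1+\gamma\alpha}$, the maximal vertical gap). This gives an interior contact point $x_1'$ and, crucially, a smooth test surface $\eps\varphi + \tfrac{\delta}{2}|x'-x_0'|^2$ touching $u^+$ from above there. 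Your ``horizontal perturbation of order $\sqrt{C_3}\,\eps^{(1+\gamma\alpha)/2}\eps^{1/4}$'' is not the right mechanism and does not produce a valid test function.

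Second, and more seriously, your error analysis is off. You try to argue $|D\varphi(x_1')|\lesssim r$ at the contact point; this is neither provable from the hypotheses nor needed. The hypothesis $|D\varphi|\le \eps^{-1/2}$ already suffices once you remember that the viscosity inequality is applied to $\eps\varphi$ (plus the paraboloid), not to $\varphi$. Hence
\[
|\eps D\varphi + \delta(x_1'-x_0')| \le \eps^{1/2} + \delta r \le 2\eps^{1/2},\qquad
|\eps D^2\varphi + \delta I| \le \eps^{1/2} + \delta \le 2\eps^{1/2},
\]
under the stated smallness of $\eps$. The nonlinear part of the minimal surface operator then contributes at most $O(\eps^{3/2})$, and the paraboloid's Laplacian contributes $(n-1)\delta = O(\eps^{1+\gamma\alpha/2})$. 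Dividing the viscosity inequality by $\eps$ gives
\[
\Delta\varphi(x_1') \ge -O(\eps^{1/2}) - O(\eps^{\gamma\alpha/2}) \ge -C_4\eps^{\gamma\alpha/2},
\]
since $\gamma\alpha/2\le 1/2$. Your computation ``$\eps^2\cdot\eps^{-1/2}\cdot\eps^{-1/2}\cdot\eps^{-1}=1$'' dropped the factor of $\eps$ on $D^2(\eps\varphi)$; with it restored the term is $O(\eps^{3/2})$, and no gradient smallness at the contact point is required.
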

\begin{proof}
	We add a sufficiently large paraboloid $P$ to $\eps \varphi$ so that a translation of $\eps \varphi + P$ touches $u^+$ from above in $B'_r(x'_0)$. That is, for some $\delta >0$ to be determined there exists some $x'_1\in B'_r(x'_0)$ such that 
	\begin{align}
		\label{minimisingdistance}
		\eps \varphi(x'_1) + \frac{\delta}{2} \abs{x_1'-x_0'}^2-u^+(x_1') 
		= \min_{\overline{B'_r(x_0')}}\left\{\eps \varphi + \frac{\delta}{2} \abs{x'-x_0'}^2-u^+(x')\right\}.
	\end{align}
	Now if $x_1' \in \p B'_r(x_0')$ we have by \eqref{minimisingdistance} that
	\begin{align*}
		\eps \varphi(x'_1) + \frac{\delta}{2} r^2 -u^+(x'_1)
		&\leq 	\eps \varphi(x_0')  - u^+(x_0'),
	\end{align*}
	and since $	\eps \varphi(x_1')  - u^+(x_1') \geq 0$ we obtain
	\begin{align*}
		\eps \varphi(x_0') \geq  \frac{\delta}{2} r^2 +u^+(x_0').
	\end{align*}
	We then find
	\begin{align*}
		0= \eps u(x_0') + C_3\eps^{1+\gamma\alpha}-\eps \varphi(x_0') 
		&\leq \eps u(x_0') + C_3\eps^{1+\gamma\alpha} - \frac{\delta}{2} r^2 - u^+(x_0')\\
		&\leq C_3\eps^{1+\gamma\alpha} - \frac{\delta}{2} r^2,
	\end{align*}
	which is a contradiction if 
	\begin{equation*}
		\delta = 4C_3\eps^{1+\frac{\gamma\alpha}{2}}.
	\end{equation*}
	Now with this $\delta$ we have that $\eps \varphi(x')+\frac{\delta}{2}\abs{x'-x_0'}^2$ is tangent from above to $u^+$ at $x_1'$ in $B'_r(x_0')$. Since $\p E$ is a viscosity solution of \eqref{MSE} we have that 
	\begin{align*}
		&\left(1+\abs{\eps D\varphi(x_1') + \delta (x_1'-x_0')}^2\right)\left(\eps \Delta \varphi(x_1') + (n-1)\delta\right) \\
		&+  \left(\eps D\varphi(x_1') + \delta (x_1'-x_0')\right)^T\left(\eps D^2\varphi(x_1')+\delta I \right) \left(\eps D\varphi(x_1') + \delta (x_1'-x_0')\right) \geq 0.
	\end{align*}
	Using the bounds on $\abs{D\varphi}$ and $\abs{D^2\varphi}$, along with the fact that $\abs{x_0'-x_1'}\leq r$, we find 
	\begin{align*}
		\abs{\eps D\varphi(x_1') + \delta (x_1'-x_0')}
		&\leq \eps^{\frac{1}{2}} + 4C_3 \eps^{1+\frac{3\gamma\alpha}{4}} \\
		&\leq \eps^{\frac{1}{2}}\left(1 + 2^{6+5\alpha}\eps_1^{-\gamma\alpha}\eps^{\frac12+\frac34\gamma\alpha}\right) \\
		&\leq 2\eps^{\frac{1}{2}}
	\end{align*}
	for $\eps \leq (2^{-6-5\alpha}\eps_1^{\gamma\alpha} )^{\frac{4}{2+3\gamma\alpha}}$. A similar computation yields the bound 
	\begin{equation*}
		\abs{\eps D^2\varphi(x_1') +\delta I } \leq 2\eps^{\frac{1}{2}}
	\end{equation*}
	for $\eps \leq (2^{-6-5\alpha}\eps_1^{\gamma\alpha} )^{\frac{2}{1+\gamma\alpha}} \leq (2^{-6-5\alpha}\eps_1^{\gamma\alpha} )^{\frac{4}{2+3\gamma\alpha}}$. Using these bounds we see that $(1+\abs{\eps D\varphi(x_1') + \delta (x_1'-x_0')}^2)(n-1)\delta \leq 2n\delta$ and so we find that
	\begin{align*}
		\eps \Delta \varphi(x_1') 
		&\geq -\eps\abs{\Delta\varphi(x_1')}\abs{\eps D\varphi(x_1') + \delta (x_1'-x_0')}^2 \\&\hspace{3mm} - (1+\abs{\eps D\varphi(x_1') + \delta (x_1'-x_0')}^2 )(n-1)\delta \\&\hspace{3mm}- \abs{\eps D\varphi(x_1') + \delta (x_1'-x_0')}^2\abs{\eps D^2\varphi(x_1') +\delta I }\\
		&\geq - 12n\eps^{3/2} - 2n\delta \\
		&\geq -C_4\eps^{1+\frac{\gamma\alpha}{2}},
	\end{align*}	
	where in the last inequality we have used that $\frac{\gamma\alpha}{2}\leq \frac12$.
\end{proof}

We will now construct a function that is strictly above $u$ for $\eps$ small enough. Recalling that $u \in C^{0,\alpha}(\overline{B}'_{1/2})$, there exists $v \in C^{0,\alpha/2}(\overline{B}'_{1/2})$ satisfying
\begin{equation*}
	\begin{cases}
		\Delta v =  - 2C_4\eps^{\frac{\gamma\alpha}{2}} & \text{in } B_{1/2}'\\
		v = u & \text{on } \p B_{1/2}'
	\end{cases}.
\end{equation*}
Moreover, splitting $v=v_1+v_2$ where 
\begin{equation*}
	\begin{cases}
		\Delta v_1 =  0 & \text{in } B_{1/2}'\\
		v_1 = u & \text{on } \p B_{1/2}',
	\end{cases}
\end{equation*}
and $v_2 = - C_4\eps^{\frac{\gamma\alpha}{2}}\frac{1}{n-1}(\abs{x'}^2-\frac14)$, we have using Proposition \ref{prop:holder}, \eqref{eqn:holderbound} and $\eps\leq \eps_1$ that
\begin{align*}
	\norm{v}_{C^{0,\alpha/2}(\overline{B}'_{1/2})}
	 &\leq 2n\cdot5^{\alpha}\norm{u}_{C^{0,\alpha}(\overline{B}'_{1/2})} + 2C_4\eps^{\frac{\gamma\alpha}{2}}\\
	 &\leq 2n\cdot(2^3)^{\alpha}(2^{6+5\alpha})+2C_4\eps^{\frac{\gamma\alpha}{2}}\\
	 & \leq 2^{10+5\alpha}n\eps_1^{-\frac{\gamma\alpha}{2}} =:C_5.
\end{align*}
\begin{lemma}\label{constructwplus}
	Let $w_+\in C^{0,\frac{\alpha}{2}}(\overline{B}_{1/2}')\cap C^{\infty}(B_{1/2}')$ satisfy
	\begin{equation}
		\label{wplus}
		\begin{cases}
			\Delta w_+ =  - 2C_4\eps^{\frac{\gamma\alpha}{2}} & \text{in } B_{1/2}'\\
			w_+ = u + 4C_5r^{\frac{\alpha}{2}} & \text{on } \p B_{1/2}'
		\end{cases}.
	\end{equation}
	For all $x_0'\in B'_{\frac{1}{2}-r}$ we have that
	\begin{equation}\label{eqn:wplus:derivatives}
		\max\left\{\sup_{i,j} \abs{D_{ij}w_+(x_0')}, \abs{Dw_+(x_0')},\abs{D^2w_+(x_0')}\right\} \leq \eps^{-\frac{1}{2}}
	\end{equation}
	as long as $\eps \leq \left(2^{-18-5\alpha}n^{-5}\eps_1^{\gamma\alpha}\right)^{\frac{2}{1-\gamma\alpha}}$.
	Consequently, $w_+ > u$ in $B_{1/2}'$.
\end{lemma}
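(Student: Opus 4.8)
The plan is to establish the two assertions in turn: first the derivative bounds \eqref{eqn:wplus:derivatives}, then the comparison $w_+ > u$ in $B_{1/2}'$. For the derivative bounds, the key observation is that $w_+$ solves a Poisson equation with a small, constant right-hand side and with boundary data a mere vertical translate of $u$; hence $w_+ = v + 4C_5 r^{\alpha/2}$ where $v$ is the function constructed just before the lemma, so $w_+$ has the same interior derivatives as $v$. I would therefore apply interior estimates for the Poisson equation to $v$: on the ball $B'_{1/2-r}$, standard interior gradient and Hessian estimates give a bound of the form $\abs{Dv(x_0')} + \abs{D^2 v(x_0')} \lesssim r^{-2}\big(\norm{v}_{L^\infty(B'_{1/2})} + \norm{\Delta v}_{L^\infty}\big)$, and since $\norm{v}_{C^{0,\alpha/2}(\overline{B}'_{1/2})} \le C_5$ and $\Delta v = -2C_4 \eps^{\gamma\alpha/2}$, this yields $\abs{Dw_+(x_0')}, \abs{D^2 w_+(x_0')} \lesssim r^{-2} C_5 = C_5 \eps^{-\gamma\alpha/2}$ (using $r = \eps^{\gamma\alpha/4}$). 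The bound $\sup_{i,j}\abs{D_{ij}w_+}$ is controlled by $\abs{D^2 w_+}$ up to a dimensional factor. One then checks that $C_5 \eps^{-\gamma\alpha/2} \le \eps^{-1/2}$, i.e. $C_5 \le \eps^{(\gamma\alpha - 1)/2}$; plugging in $C_5 = 2^{10+5\alpha} n \eps_1^{-\gamma\alpha/2}$ and noting $\gamma\alpha < 1$ (so $(\gamma\alpha-1)/2 < 0$), this is exactly an upper bound on $\eps$ of the claimed form $\eps \le (2^{-18-5\alpha} n^{-5} \eps_1^{\gamma\alpha})^{2/(1-\gamma\alpha)}$, modulo absorbing the dimensional constants from the interior estimate into the exponent's base. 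I expect the bookkeeping of these constants — tracking exactly which power of $n$ and $5$ comes out of the interior estimate so that it matches $2^{-18-5\alpha}n^{-5}$ — to be the most delicate part.

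For the comparison $w_+ > u$ in $B_{1/2}'$, I would argue by contradiction using Lemma \ref{almostviscosity}. Suppose $w_+ \le u$ somewhere in $B_{1/2}'$; since $w_+ = u + 4C_5 r^{\alpha/2} > u$ on $\p B'_{1/2}$, the function $u + C_3\eps^{\gamma\alpha} - (w_+ - 4C_5 r^{\alpha/2} + \text{something})$... more precisely, I would slide $w_+$ downward: consider the largest $t \ge 0$ such that $w_+ - t \ge u + C_3\eps^{\gamma\alpha}$ fails to hold, equivalently find the first contact point of a downward vertical translate of $w_+$ with $u + C_3\eps^{\gamma\alpha}$. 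Because of the strict gap $4C_5 r^{\alpha/2}$ on the boundary versus the Hölder modulus of $u$ near the boundary (here one uses $[u]_{C^{0,\alpha}} \le 2^{4+5\alpha}$ and $r^{\alpha/2}$ versus the relevant length scale), the contact point $x_0'$ lies in the interior, in fact in $B'_{1/2-2r}$. At that contact point $w_+$ (shifted) touches $u + C_3\eps^{\gamma\alpha}$ from above, and by \eqref{eqn:wplus:derivatives} the derivative hypotheses of Lemma \ref{almostviscosity} are satisfied (the constant shift does not affect derivatives), so there exists $x_1' \in B'_r(x_0')$ with $\Delta w_+(x_1') \ge -C_4 \eps^{\gamma\alpha/2}$. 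But $\Delta w_+ \equiv -2C_4\eps^{\gamma\alpha/2}$ everywhere, giving $-2C_4\eps^{\gamma\alpha/2} \ge -C_4\eps^{\gamma\alpha/2}$, a contradiction since $C_4 > 0$ and $\eps > 0$.

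The remaining point to verify carefully is that the downward-translate argument indeed produces an interior contact point: one must rule out that contact first occurs within the boundary layer $B'_{1/2} \setminus B'_{1/2-2r}$. Here I would estimate, for $x'$ in this layer, how much $u(x')$ can exceed its boundary values: $u(x') \le u(\pi(x')) + [u]_{C^{0,\alpha}}(2r)^{\alpha} \le u(\pi(x')) + 2^{4+5\alpha}(2r)^\alpha$ where $\pi(x')$ is the nearest boundary point, while $w_+(x') \ge w_+(\pi(x')) - (\text{oscillation of } w_+ \text{ over scale } 2r)$; using the already-established Lipschitz-type bound on $w_+$ from \eqref{eqn:wplus:derivatives}, the oscillation of $w_+$ over a ball of radius $2r$ is at most $2r \cdot \eps^{-1/2}$, which is small, and combined with $w_+ = u + 4C_5 r^{\alpha/2}$ on $\p B'_{1/2}$ one gets $w_+(x') - u(x') \ge 4C_5 r^{\alpha/2} - 2^{4+5\alpha}(2r)^\alpha - 2r\eps^{-1/2} > 0$ provided $\eps$ (hence $r$) is small enough — which is again guaranteed by the stated upper bound on $\eps$. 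I expect this boundary-layer estimate, together with the constant-chasing in the first paragraph, to be where all the work lies; the viscosity contradiction itself is immediate once the contact point is placed in the interior.
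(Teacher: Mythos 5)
Your overall strategy is the paper's: for the derivative bounds you split off the explicit particular solution of the constant right-hand-side Poisson equation and apply interior estimates to the remaining harmonic part at scale $r$ (the paper works with the decomposition $(w_+)_1+(w_+)_2$ rather than with $v$, but since $w_+=v+4C_5r^{\alpha/2}$ this is the same computation), and for the comparison $w_+>u$ you use the same sliding/first-contact argument, excluding an interior contact point via Lemma \ref{almostviscosity} and a contact point in the boundary layer via a modulus-of-continuity estimate. The deferred constant-chasing in the first part is a real (if routine) omission in a paper whose entire purpose is explicit constants, but it is not a structural problem.

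There is, however, a genuine error in your boundary-layer estimate. You control the oscillation of $w_+$ over the annulus $B'_{1/2}\setminus B'_{1/2-2r}$ by $2r\cdot\eps^{-1/2}$ using \eqref{eqn:wplus:derivatives} and assert that this is small. It is not: with $r=\eps^{\gamma\alpha/4}$ and $\gamma\alpha/4\le\tfrac{1}{12}$ one has $2r\eps^{-1/2}=2\eps^{\gamma\alpha/4-1/2}\ge 2\eps^{-5/12}\to\infty$ as $\eps\to0$, so this term swamps the boundary gap $4C_5r^{\alpha/2}=4C_5\eps^{\gamma\alpha^2/8}$ instead of being absorbed by it, and your inequality $4C_5r^{\alpha/2}-2^{4+5\alpha}(2r)^{\alpha}-2r\eps^{-1/2}>0$ is false for small $\eps$. (Moreover, \eqref{eqn:wplus:derivatives} is only established on $B'_{1/2-r}$, so the Lipschitz bound cannot be integrated along a segment reaching the nearest boundary point in any case.) The correct tool --- which the paper invokes with the phrase ``H\"older continuity of both $u$ and $w_+$'' and which is already implicit in your own first observation --- is the global boundary estimate of Proposition \ref{prop:holder}: since $w_+=v+4C_5r^{\alpha/2}$, one has $[w_+]_{C^{0,\alpha/2}(\overline{B}'_{1/2})}=[v]_{C^{0,\alpha/2}(\overline{B}'_{1/2})}\le C_5$, hence the oscillation of $w_+$ over scale $2r$ is at most $C_5(2r)^{\alpha/2}\le 2^{1/8}C_5r^{\alpha/2}$, and then $4C_5r^{\alpha/2}-2^{1/8}C_5r^{\alpha/2}-2^{4+6\alpha}r^{\alpha}>0$ does hold because $2^{4+6\alpha}\le C_5$ and $r^{\alpha}\le r^{\alpha/2}$. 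With this replacement the argument closes.
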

\begin{proof}
	We split $w_+ = (w_+)_1 + (w_+)_2$ where
	\begin{equation*}
		\begin{cases}
			\Delta (w_+)_1 =  0 & \text{in } B_{1/2}'\\
			(w_+)_1  = u + 4C_5r^{\frac{\alpha}{2}} & \text{on } \p B_{1/2}'
		\end{cases}
	\end{equation*}
	and $(w_+)_2 = - C_4\eps^{\frac{\gamma\alpha}{2}}\frac{1}{n-1}(\abs{x'}^2-\frac14)$. A direct computation shows that
	\begin{equation*}
		\sup_{1\leq i\leq n}\abs{D_i(w_+)_2} \leq \sup_{1\leq i,j \leq n} \abs{D_{ij}(w_+)_2} \leq \frac1{2n^2}\eps^{-\frac12}
	\end{equation*}
	for $\eps \leq \left(2^{-10-5\alpha}n^{-2}\eps_1^{\gamma\alpha}\right)^{\frac{2}{1+\gamma\alpha}}$. It then immediately follows that
	\begin{equation}
		\label{eqn:wplus2:derivatives}
		\sup_{1\leq k \leq 2} \abs{D^k(w_+)_2} \leq \frac12\eps^{-\frac12} \text{ in } B'_{1/2}.
	\end{equation}
	Moreover, for any $x_0' \in B'_{1/2-r}$ we can apply the derivative estimates for harmonic functions in $B'_{\frac{r}{2}}(x_0')\subset B'_{1/2}$ and obtain for any $1\leq i \leq n$ that
	\begin{align*}
		\abs{D_i(w_+)_1(x_0')} \leq \frac{2n}{r} \norm{(w_+)_1}_{L^{\infty}(B'_{1/2})} \leq 2n\eps^{-\frac{\gamma\alpha}{4}} ( \norm{u}_{L^{\infty}(B'_{1/2})}+4C_5r^{\frac{\alpha}{2}}),
	\end{align*}
	where in the last inequality we have used the maximum principle. Similarly for any $1\leq i,j \leq n$ we find that
	\begin{align*}
		\abs{D_{i,j}(w_+)_1(x_0')} \leq \left(\frac{4n}{r}\right)^2 \norm{(w_+)_1}_{L^{\infty}(B'_{1/2})}\leq 16n^2\eps^{-\frac{\gamma\alpha}{2}} ( \norm{u}_{L^{\infty}(B'_{1/2})}+4C_5r^{\frac{\alpha}{2}}).
	\end{align*}
	Choosing $\eps \leq \left(2^{-18-5\alpha}n^{-5}\eps_1^{\gamma\alpha}\right)^{\frac{2}{1-\gamma\alpha}} \leq \left(2^{-10-5\alpha}n^{-1}\eps_1^{\gamma\alpha}\right)^{\frac{2}{1+\gamma\alpha}}$ we see that
	\begin{equation*}
		\max\left\{\sup_{1\leq i\leq n}\abs{D_i(w_+)_1(x_0')}, \sup_{1\leq i,j \leq n} \abs{D_{ij}(w_+)_1(x_0')} \right\} \leq \frac1{2n^2}\eps^{-\frac12}.
	\end{equation*}
	Consequently we obtain that
	\begin{equation*}
		\sup_{1\leq k \leq 2} \abs{D^k(w_+)_1} \leq \frac12\eps^{-\frac12} \text{ in } B'_{1/2-r}.
	\end{equation*}
	This coupled with \eqref{eqn:wplus2:derivatives} establishes \eqref{eqn:wplus:derivatives}.\\
	Since  $\eps \leq \left(2^{-18-5\alpha}n^{-5}\eps_1^{\gamma\alpha}\right)^{\frac{2}{1-\gamma\alpha}} \leq \left(2^{-6-5\alpha} \eps_1^{\gamma\alpha} \right)^{\frac{2}{1+\gamma\alpha}}$, $w_+$ will not touch $u$ in $B'_{1/2-2r}$ or else that would contradict Lemma \ref{almostviscosity}. Moreover $w_+$ will not touch $u$ in the annulus $B'_{1/2}\backslash B'_{1/2-2r}$ or else that would violate the H\"older continuity of both $u$ and $w_+$.
\end{proof}
Arguing similarly as in Lemma \ref{almostviscosity} and Lemma \ref{constructwplus}, we can construct a function $w_-$ satisfying
\begin{equation}
	\label{wminus}
	\begin{cases}
		\Delta w_- =   2C_4\eps^{\frac{\gamma\alpha}{2}} & \text{in } B_{1/2}'\\
		w_- = u - 4C_5r^{\frac{\alpha}{2}} & \text{on } \p B_{1/2}',
	\end{cases}
\end{equation}
and which lives below the graph of $u$ in $B'_{1/2}$ for $\eps \leq \left(2^{-18-5\alpha}n^{-5}\eps_1^{\gamma\alpha}\right)^{\frac{2}{1-\gamma\alpha}}$.
The existence of $w_+$ and $w_-$ now allows us to show that $u$ is actually very close to it's harmonic replacement. 

\begin{proposition}
	\label{closetoharmonic}
	Let $w\in C^{0,\frac{\alpha}{2}}(\overline{B}_{1/2}')\cap C^{\infty}(B_{1/2}')$ be the solution of
	\begin{equation*}
		\begin{cases}
			\Delta w = 0 & \text{in } B_{1/2}'\\
			w = u  & \text{on } \p B_{1/2}'.
		\end{cases}
	\end{equation*}
	If $\eps \leq \left(2^{-18-5\alpha}n^{-5}\eps_1^{\gamma\alpha}\right)^{\frac{2}{1-\gamma\alpha}}$ then $\norm{u-w}_{L^{\infty}(B'_{1/2})} \leq C_6\eps^{\frac{\gamma\alpha^2}{8}}$ where $C_6 = 2^{14+5\alpha}n\eps_1^{-\gamma\alpha}$.
\end{proposition}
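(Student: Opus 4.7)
The plan is to sandwich $u$ between the barriers $w_-$ and $w_+$ constructed above and then estimate how far each of these barriers is from the harmonic replacement $w$ using only the maximum principle. Since Lemma \ref{constructwplus} (and its analogue for $w_-$) already gives $w_-(x') \leq u(x') \leq w_+(x')$ throughout $B'_{1/2}$, it suffices to bound $\|w_+ - w\|_{L^\infty(B'_{1/2})}$ and $\|w - w_-\|_{L^\infty(B'_{1/2})}$ separately and then conclude via a simple sandwich argument.

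For the first of these, note that the difference $w_+ - w$ solves the Poisson equation $\Delta(w_+ - w) = -2C_4\eps^{\gamma\alpha/2}$ in $B'_{1/2}$ with constant boundary datum $4C_5 r^{\alpha/2}$ on $\p B'_{1/2}$. I would split this as the sum of the constant harmonic function $h \equiv 4C_5 r^{\alpha/2}$ and the explicit particular solution $p(x') = -\frac{C_4\eps^{\gamma\alpha/2}}{n-1}\bigl(|x'|^2 - \tfrac{1}{4}\bigr)$, which vanishes on $\p B'_{1/2}$ and satisfies $0 \leq p \leq \frac{C_4\eps^{\gamma\alpha/2}}{4(n-1)}$ inside. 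Recalling that $r = \eps^{\gamma\alpha/4}$, so $r^{\alpha/2} = \eps^{\gamma\alpha^2/8}$, this immediately yields
\begin{equation*}
\|w_+ - w\|_{L^\infty(B'_{1/2})} \leq 4C_5 \eps^{\gamma\alpha^2/8} + \frac{C_4\eps^{\gamma\alpha/2}}{4(n-1)}.
\end{equation*}
Since $\alpha \leq 1/4$ by Remark \ref{remark:assumption}, we have $\gamma\alpha/2 \geq \gamma\alpha^2/8$, so the second term is of strictly lower order and the whole right-hand side can be absorbed into a single multiple of $\eps^{\gamma\alpha^2/8}$. The same reasoning (with the roles of $w$ and $w_-$ exchanged and the sign of the forcing flipped) yields an identical bound on $\|w - w_-\|_{L^\infty(B'_{1/2})}$.

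Combining the sandwich $w_- \leq u \leq w_+$ with the two bounds above yields $|u - w| \leq \max(w_+ - w,\, w - w_-)$ pointwise, which is bounded by a constant multiple of $\eps^{\gamma\alpha^2/8}$. The final step is to unwind the definitions of $C_4 = 2^{8+5\alpha}n\eps_1^{-\gamma\alpha}$ and $C_5 = 2^{10+5\alpha}n\eps_1^{-\gamma\alpha/2}$ and verify that the resulting multiplicative constant is at most $C_6 = 2^{14+5\alpha}n\eps_1^{-\gamma\alpha}$; this is a routine numerical check that uses $\eps_1 \leq 1/4 \leq 1$ to replace the weaker factor $\eps_1^{-\gamma\alpha/2}$ by the stronger $\eps_1^{-\gamma\alpha}$ and then collects powers of $2$.

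The conceptual content of the proof is essentially exhausted by the construction of the two barriers $w_\pm$, which was the genuinely nontrivial step accomplished in Lemmas \ref{almostviscosity} and \ref{constructwplus}. Consequently, the main obstacle in the present proposition is not analytic but arithmetic: one must keep careful track of which exponents of $\eps$ and $\eps_1$ appear at each stage so as to produce the precise constant $C_6$ claimed, rather than merely a qualitative $O(\eps^{\gamma\alpha^2/8})$ estimate.
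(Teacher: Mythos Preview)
Your proposal is correct and follows essentially the same strategy as the paper: apply the maximum principle to the explicit Poisson problems satisfied by the differences, using the barriers $w_\pm$ from Lemma~\ref{constructwplus} and its analogue. The only cosmetic difference is that the paper bounds $\|w_+ - w_-\|_{L^\infty}$ in one step and then invokes both sandwiches $w_- < u < w_+$ and $w_- < w < w_+$, whereas you bound $\|w_+ - w\|_{L^\infty}$ and $\|w - w_-\|_{L^\infty}$ separately; the arithmetic and the constants come out the same.
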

\begin{proof}
	Using \eqref{wplus} and \eqref{wminus} we have that
	\begin{equation*}
		\begin{cases}
			\Delta (w_+ - w_-)=  -4C_4\eps^{\frac{\gamma\alpha}{2}} & \text{in } B_{1/2}'\\
			w_+ - w_- =  8C_5r^{\frac{\alpha}{2}} & \text{on } \p B_{1/2}'
		\end{cases}
	\end{equation*}
	and so by the maximum principle we have
	\begin{equation*}
		\norm{w_+-w_-}_{L^{\infty}(B_{1/2}')} \leq 8C_5\eps^{\frac{\gamma\alpha^2}{8}} + \frac12\frac{C_4}{n-1}\eps^{\frac{\gamma\alpha}{2}} \leq C_6\eps^{\frac{\gamma\alpha^2}{8}}.
	\end{equation*}
	Since for $\eps \leq \left(2^{-18-5\alpha}n^{-5}\eps_1^{\gamma\alpha}\right)^{\frac{2}{1-\gamma\alpha}}$ we have $w_- < u < w_+$, and since  $w_- < w < w_+$, the result follows.
\end{proof}
We can now finally give the
\begin{proof}[Proof of Theorem \ref{improvementofflatness}]
	We assume that $\eps \leq \left(2^{-18-5\alpha}n^{-5}\eps_1^{\gamma\alpha}\right)^{\frac{2}{1-\gamma\alpha}}$ so that Proposition \ref{closetoharmonic} holds. Moreover, in order to simplify our estimates we will use the bound $\alpha \leq \frac14$ (c.f. Remark \ref{remark:assumption}) so that in particular $\norm{u}_{L^{\infty}(\overline{B}'_{1/2})} \leq 2^6$. \\
	Since $w$ is harmonic, we have for any $x'\in B'_{2r\naught}(0)$ with $r\naught\leq \frac{1}{8}$ that
	\begin{align*}
		\abs{w(x')-w(0)-\nabla w(0)\cdot x'}
		&\leq 2r\naught^2 \max_{1\leq i,j\leq n}\norm{D_{ij}w}_{L^{\infty}(B'_{\frac14})}\\
		&\leq 128n^2r\naught^2\norm{w}_{L^{\infty}(\p{B}'_{1/2})}\\
		&\leq 128n^2r\naught^2\norm{u}_{L^{\infty}(\overline{B}'_{1/2})}\\
		&\leq 2^{13}n^2r\naught^2.
	\end{align*}
	Choosing $r\naught= 2^{-16}n^{-2}$ we have that
	\begin{equation}
		\abs{w(x')-w(0)-\nabla w(0)\cdot x'} \leq \frac{r\naught}{8}.
		\label{linearpparoximation}
	\end{equation}
	Since $u(0) =0$ we have using Proposition \ref{closetoharmonic} for any $x'\in B'_{2r\naught}(0)$ that
	\begin{align*}
		\abs{u(x')-\nabla w(0)\cdot x'}
		&\leq \abs{u(x')-w(x')}+\abs{u(0)-w(0)}+\abs{w(x')-w(0)-\nabla w(0)\cdot x'}\\
		&\leq 2C_6\eps^{\frac{\gamma\alpha^2}{8}} + \frac{r\naught}{8}\\
		&\leq \frac{3r\naught}{8}
	\end{align*}
	by choosing $\eps \leq \left(\frac{r\naught}{8C_6}\right)^{\frac{8}{\gamma\alpha^2}}$. Since with this choice of $\eps$ we have that $\eps \leq \left(\frac{r\naught}{8C_3}\right)^{\frac{1}{\gamma\alpha}}$, we obtain using \eqref{eqn:difference:A:u} that
	\begin{equation*}
		A \cap \left\{\abs{x'}\leq 2r\naught\right\}\subset \left\{\abs{x_n - \nabla w(0)\cdot x'}\leq \frac{r\naught}{2}\right\}.
	\end{equation*}
	It then immediately follows that
	\begin{equation*}
		\p E \cap B_{r_0} \subset \left\{\abs{x\cdot \nu}\leq\frac{\eps}{2}r_0\right\},
	\end{equation*}
	where $\nu = \frac{(-\eps \nabla w(0),1)}{\abs{(-\eps \nabla w(0),1)}}$.  Since $\alpha \leq \frac14$ we also readily have that $\left(\frac{r\naught}{8C_6}\right)^{\frac{8}{\gamma\alpha^2}}\leq  \left(2^{-18-5\alpha}n^{-5}\eps_1^{\gamma\alpha}\right)^{\frac{2}{1-\gamma\alpha}}$ and so we can take $\eps\naught =\left(\frac{r\naught}{8C_6}\right)^{\frac{8}{\gamma\alpha^2}}$ which concludes the proof. 
\end{proof}

\renewcommand{\bibname}{References}
\bibliographystyle{plain}
\bibliography{references}
\end{document}